\newtheorem{definition}{Definition}
\newtheorem{lemma}{Lemma}
\newtheorem{example}{Example}
\newtheorem{theorem}{Theorem}
\newtheorem{proposition}{Proposition}
\newtheorem{remark}{Remark}
\newtheorem{corollary}{Corollary}
\newenvironment{proof}[1][Proof]{\noindent\textbf{#1.} }{\ \rule{0.5em}{0.5em}}
\begin{document}

\title{Robustness of non-computability}
\author{Daniel S. Gra\c{c}a\\Universidade do Algarve, C. Gambelas, 8005-139 Faro, Portugal\\ and Instituto de Telecomunica\c{c}\~{o}es, Lisbon, Portugal
	\and Ning Zhong\\DMS, University of Cincinnati, Cincinnati, OH
	45221-0025, U.S.A.}
\maketitle

\begin{abstract}
		Turing computability is the standard computability paradigm which captures the computational power of digital computers. To understand whether one can create physically realistic devices which have super-Turing power, one needs to understand whether it is possible to obtain systems with super-Turing capabilities which also have other desirable properties such as robustness to perturbations.        	
	In this paper we introduce a framework for analyzing whether a
	non-computability result is robust over continuous spaces. Then we use this framework to study the degree of robustness of
	several non-computability results which involve the wave equation, differentiation, and basins of attraction.
\end{abstract}

\section{Introduction} \label{introduction}

The Turing paradigm captures the computational power of digital computers. It is currently unknown whether the Turing paradigm captures the computational power of any other physically realistic device which can be used to compute under a reasonable notion of computation. Nonetheless, several results exist which show that non-computability can occur in the setting of continuous systems. Non-computability results which pertain to the asymptotic behavior (when time goes to infinity) of a continuous system are usually coherent with the Turing paradigm since they usually are just a continuous variant of the Halting problem. For example, if a digital computer is considered as an analog machine in the sense that it uses analog voltages, etc., then its asymptotic behavior may be non-computable, but this does imply super-Turing computational power as the asymptotic behavior of Turing machines is also non-computable.

More surprising from the Turing paradigm point of view are results which allow one to obtain a non-computable quantity in a \emph{finite amount of time}, thus achieving super-Turing computational power, e.g.~as in the case of the computable wave equation from \cite{PR81}, \cite{PZ97} having a unique non-computable solution. However, these results rely on the use of systems having some properties which are not generally deemed ``realistic''. For example, such systems may have a non-unique solution \cite{Abe71}, \cite{PR79} or may not be robust to perturbations. Some cases are more subtle, as in the case of the computable wave equation admitting a unique non-computable solution (in particular this solution is not computable for $t=1$, where $t$ is the time variable) \cite{PR81}, \cite{PZ97}. Non-computability in the later case disappears if one uses an appropriate norm for the case
\cite{WZ02}.

The present paper focuses on the study of the robustness of non-computability. In particular, we introduce a framework which allows one to systematically study the degree of robustness of non-computability results over continuous spaces. We will consider some non-computability results involving the asymptotic behavior of continuous systems (e.g.~determining the basin of attraction of an ordinary differential equation) and other results where non-computability is not related to asymptotic behavior (differentiation). Using our framework, we conclude that the former (asymptotic) non-computability results are robust to an high degree, while the latter non-computability results are not robust to the same extent. Next we provide a more detailed description of the contents of this paper.

In the classical setting the notion of computability is defined over
discrete spaces such as the set $\mathbb{N}$ of non-negative
integers or the set $\{0,1\}^{*}$ of binary words. The ``digital
nature" of a function acting on $\mathbb{N}$ stems from the discrete
topology on $\mathbb{N}$. In computable analysis the notion of
computability is extended to continuous spaces (i.e., non-discrete
topological spaces) such as the set $\mathbb{R}$ of real numbers or
the set $C(\mathbb{R})$ of continuous real-valued functions on
$\mathbb{R}$ (see e.g.~\cite{Wei00}, \cite{BHW08}). The extension
depends heavily on the continuous nature of real numbers. There are
substantial differences between the continuous and the discrete
spaces which are relevant from a computability perspective. For
example, a function $f$ from $\mathbb{N}$ to $\mathbb{N}$ is always
continuous and maps every computable point to a computable point;
but the same cannot be said of a function $g$ from $\mathbb{R}$ to
$\mathbb{R}$: $g$ may map a computable real number to a
non-computable real number. If this happens, it is natural to ask
whether the non-computability spreads to a neighborhood of $x$. The
existence of non-trivial neighborhoods is a character pertaining to
continuous phenomena only.

Another theme which can occur in the context of continuous spaces is
the realism of non-computability results, as mentioned earlier.
Thus, it seems desirable to
distinguish different types of non-computability in terms of their
degree of ``physical realism."

We remark in passing that although the notion of ``physical
realism'' or ``physical relevance'' of some property of a system is
not precisely defined, it is commonly accepted by physicists that a
``physically relevant'' property should be robust under small
perturbations. This is because errors in observations and in
measurements are inherent and unavoidable to any physical systems.
We note that the idea of ``physical relevance'' implying robustness
to (appropriate) perturbations is not new and has been used in
various fields, e.g., in the dynamical system theory (see
e.g.~\cite[p.~259]{GH83}). It is clear that robustness under perturbations (i.e.,
nearby points share the same qualitative property) is also
pertaining to continuous spaces only.

In the continuous setting the non-computability of an operator is
usually the result of either (i) discontinuity,
or (ii) an explicit construction of the operator producing the
non-computable image on a computable input (which may happen asymptotically). The latter is stronger
because there are discontinuous (and hence non-computable) operators
which take every computable element to a computable element. On the
other hand, since an explicit construction usually makes use of a
single point in the domain of the operator, often the
non-computability has no ``interaction" with the topology of the
domain - it does not provide information on computability of nearby
points such as whether the operator remains non-computable for the
nearby points, for some of the nearby points, or the operator
becomes computable for the nearby points; in other words, it does
not provide information on whether the non-computability is robust
under perturbations.

In this paper, we study the robustness of non-computability over
continuous spaces. To the best of our knowledge, this issue has not
been addressed in the field of computable analysis and real
computation. Our main contribution is to introduce a framework for analyzing whether a
non-computability result is robust with respect to the relevant
topologies. Then we will use this framework to study the degree of
robustness of some non-computability results concerning the wave equation, differentiation, and basins of attraction. Recall
that the basin of attraction of an attractor is the set of points
which eventually converge to this attractor. We observe that there
is a close resemblance between a basin of attraction and the Halting
problem in the discrete setting.

The main results of the paper are summarized below using the notions
of robustness in non-computability -- robust non-computability
and semi-robust non-computability -- introduced in section 3. The results
are stated informally. The precise statements are given in sections
4, 5 and 6.

\begin{itemize}
\item[(1)] The differentiation operator $D: C^1(\mathbb{R}) \to
C(\mathbb{R})$, $f\mapsto f^{\prime}$, is robustly non-computable
at some functions in $C^1(\mathbb{R})$. That is, there exist a
function $f\in C^1(\mathbb{R})$ and a $C^1$-neighborhood of $f$ such
that the derivative $g^{\prime}$ is non-computable for every $g$ in
this neighborhood. But the claim breaks down if $\mathbb{R}$ is
replaced by a compact interval $[a, b]$, where $a$ and $b$ are
computable real numbers: the differentiation $D:C^1([a, b]) \to
C([a, b])$ cannot be robustly non-computable at any function in
$C^1([a, b])$; on the other hand, $D$ can be semi-robustly non-computable
at some functions in $C^1([a, b])$. That is, there exists a function
$f\in C^1([a, b])$ such that for any $C^1$-neighborhood $U$ of $f$,
there exists some computable function $g\in U$ whose derivative
$g^{\prime}$ is non-computable. These results are presented in
section 4.

\item[(2)] The map $(h, s_h)\mapsto W_{s_h}$ is robustly non-computable
at some computable and analytic function $f: \mathbb{R}^3 \to
\mathbb{R}^3$, where $h\in C^1(\mathbb{R}^3)$, $s_h$ is a sink (i.e.~an attracting equilibrium point) of
$h$ (computable from $h$), and $W_{s_h}$ is the basin of attraction
of $s_h$. This result is presented in section 5.

\item[(3)] Let $K$ be a closed disk centered at the origin of
$\mathbb{R}^2$ with a rational radius. Then the map $(h, s_h)
\mapsto W_{s_h}$ is not robustly non-computable at any $h\in
\mathcal{V}(K)$, where $\mathcal{V}(K)$ is the set of all $C^1$
vector fields $K\to \mathbb{R}^2$ pointing inwards along the
boundary of $K$. On the other hand, the map $(h, s_h) \mapsto
W_{s_h}$ can be semi-robustly non-computable at some computable $f\in
\mathcal{V}(K)$. These results are discussed in section 6.
\end{itemize}

\section{Preliminaries}

\label{Sec:prelims}

\subsection{Computable analysis}

\label{Subsec:comptanalysis}

There are several different yet equivalent approaches to computable
analysis going back to Grzegorczyk and Lacombe in the 1950s. In this
paper, we use the representation version.

Roughly speaking, an object is computable if it can be approximated
by computer-generated approximations with an arbitrarily high
precision. Formalizing this idea to carry out computations on
infinite objects such as real numbers, we encode those objects as
infinite sequences of rational numbers (or equivalently, sequences
of any finite or countable set $\Sigma$ of symbols), using
representations (see \cite{Wei00} for a complete development). A
represented space is a pair $(X; \delta)$ where $X$ is a set,
$\delta$ is a coding system (or naming system) on $X$ with codes
from $\Sigma$ having the property that
$\mbox{dom}(\delta)\subseteq\Sigma^{\mathbb{N}}$ and $\delta:
\Sigma^{\mathbb{N}}\to X$ is an onto map. Every
$q\in\mbox{dom}(\delta)$ satisfying $\delta(q)=x$ is called a
$\delta$-name of $x$ (or a name of $x$ when $\delta$ is clear from
context). Naturally, an element $x\in X$ is computable if it has a
computable name in $\Sigma^{\mathbb{N}}$. The notion of
computability on $\Sigma ^{\mathbb{N}}$ is well established, and
$\delta$ lifts computations on $X$ to computations on
$\Sigma^{\mathbb{N}}$. The representation $\delta$ also induces a
topology $\tau_{\delta}$ on $X$, where $\tau_{\delta} = \{
U\subseteq X: \, \delta^{-1}(U)\text{ is open in }\operatorname{dom}
(\delta)\}$ is called the final topology of $\delta$ on $X$.

The notion of computable maps between represented spaces now arises
naturally. A map $\Phi: (X;\delta_{X})\to(Y;\delta_{Y})$ between two
represented spaces is computable if there is a computable map $\phi:
\Sigma^{\mathbb{N}}\to\Sigma^{\mathbb{N}}$ such that
$\Phi\circ\delta_{X}=\delta_{Y}\circ\phi$  as depicted below (see
e.g. \cite{BHW08}).
\begin{center}
    \begin{tikzcd}
        \Sigma^{\mathbb{N}} \arrow[r, "\phi"] \arrow[d, "\delta_{X}"]
        & \Sigma^{\mathbb{N}} \arrow[d, "\delta_{Y}"] \\
        X \arrow[r, "\Phi"]
        & Y
    \end{tikzcd}
\end{center}
Informally speaking, this means that there is a computer program
$\phi$ that outputs a name of $\Phi(x)$ when given a name of $x$ as
input. Since $\phi$ is computable, it transforms every computable
element in $\Sigma^{\mathbb{N}}$ to a computable element in
$\Sigma^{\mathbb{N}}$; hence the map $\Phi$ takes every computable
point in $X$ to a computable point in $Y$. If $\Phi$ takes every
computable point in $X$ to a computable point in $Y$, then it is
called computably invariant (terminology introduced in \cite{Bra99}). A computably invariant map is not necessarily
computable \cite{Bra99}. Another fact about computable maps is that
computable maps are continuous with respect to the corresponding
final topologies induced by $\delta_X$ and $\delta_Y$.

In the later sections, we will consider non-computability of
operators having domain contained in the set $C(W)$ of continuous
(or $C^1(W)$ of continuously differentiable) real-valued functions
defined on some open subset $W$ of $\mathbb{R}^d$. Hence, it is
perhaps in order to recall the standard topologies on these sets.
For $x\in \mathbb{R}^d$, $\| x\|= \max_{1\leq i\leq d}|x_i|$.  For
$f\in C(W)$ (resp. $f\in C^1(W)$), the $C$-norm of $f$ is defined to
be $\|f\|=\sup_{x\in W}|f(x)|$ (resp. the $C^1$-norm of $f$ is
defined to be $\|f\|_1 = \sup_{x\in W}\{ |f(x)|, \| Df(x)\|\}$). The
$C$-norm (resp. the $C^1$-norm) induces a metric and thus a
(uniform) topology on $C(W)$ (resp. $C^1(W)$) as follows: $d_{C}(f,
g) = \min\{ \| f-g\|, 1\}$ for $f, g\in C(W)$; $d_{C^1}(f, g)=
\min\{ \| f - g \|_1, 1\}$. In analysis and dynamical systems,
$C$-norm and $C^1$-norm are often used to measure the distance
between any two functions in $C(W)$ or in $C^1(W)$. If $K$ is a
compact subset of $W$, then the $C$-norm and the $C^1$-norm are
defined by $\| f\| = \max_{x\in K}\|f(x)\|$ for $f\in C(K)$ and  $\|
f\|_{1} = \max_{x\in K}\|f(x)\| + \max_{x\in K}\|Df(x)\|$ for $f\in
C^1(K)$.

In this paper, we use the following particular representations for
points in $\mathbb{R}^{d}$; for open subsets of $\mathbb{R}^{d}$;
and for functions in $C(\mathbb{R}^d)$ and in $C^1(\mathbb{R}^d)$:
\begin{itemize}
\item[(1)] For a point $x\in\mathbb{R}^{d}$, a name of $x$ is a sequence $\{
r_{k}\}$ of points with rational coordinates satisfying
$\|x-r_{k}\|<2^{-k}$. Thus $x$ is computable if there is a Turing
machine (or a computer program or an algorithm) that outputs a
rational $d$-tuple $r_{k}$ on input $k$ such that
$\|r_{k}-x\|<2^{-k}$; for a sequence $\{ x_{j}\}$,
$x_{j}\in\mathbb{R}^{d}$, a name of $\{x_{j}\}$ is a double sequence
$\{ r_{j,k}\}$ of points with rational coordinates satisfying
$\|x_{j}-r_{j, k}\|<2^{-k}$.

\item[(2)] For an open subset $U$ of $\mathbb{R}^{d}$, a name of $U$ consists
of a pair of an inner-name and an outer-name; an inner-name is a
sequence of open balls $B(a_{k}, r_{k})=\{ x\in\mathbb{R}^{d} \, :
\, d(a_{k}, x)<r_{k}\}$, $a_{k}\in\mathbb{Q}^{d}$ and
$r_{k}\in\mathbb{Q}$, exhausting $U$, i.e.,
$U=\bigcup_{k=1}^{\infty}B(a_{k}, r_{k})$; an outer name is a
sequence dense in $\mathbb{R}^d\setminus U$. $U$ is said to be
r.e.~open if the sequences $\{ a_{k}\}$ an $\{ r_{k}\}$ are
computable; $U$ is said to be co-r.e.~open if the sequence (dense in
$\mathbb{R}^d\setminus U$) is computable; and $U$ is said to be
computable if it is r.e.~and co-r.e..

\item[(3)] For every $f\in C(\mathbb{R}^d)$ (resp. $f\in C^1(\mathbb{R}^d)$),
a name of $f$ is a double sequence $\{ P_{k, l}\}$ of polynomials
with rational coefficients satisfying $\| f - P_{k, l}\| <2^{-k}$
(resp. $\| f - P_{k, l}\|_{1} <2^{-k}$) on the closed ball
$\overline{B}(0, 2^l)\subset \mathbb{R}^d$. A name for a
vector-valued function consists of the names of its component
(real-valued) functions. Thus, $f$ is computable (resp.
$C^1$-computable) if there is a Turing machine that outputs a
vector-valued function with polynomial components $P_{k, l}$ (more
precisely coefficients of $P_{k, l}$) on input $k, l$ satisfying $\|
P_{k, l} - f \| <2^{-k}$ (resp. $\| P_{k, l} - f \|_{1} < 2^{-k}$)
on $\overline{B}(0, 2^{l})$.
\end{itemize}

By the definition, a planar computable bounded open set can be
visualized on a computer screen with an arbitrarily high
magnification. For a closed subset $D$ of $\mathbb{R}^d$, $D$ is
said to be computable if its complement $\mathbb{R}^d\setminus D$ is
a computable open subset of $\mathbb{R}^d$; or equivalently, the
distance function $d_{D}: \mathbb{R}^d \to \mathbb{R}$, $d_{D}(x) =
\inf _{y\in D} \| y - x\|$, is a computable function. \\

\subsection{Dynamical systems}

\label{Subsec:DynamicalSystem}

We recall that there are two broad classes of dynamical systems:
discrete-time and continuous-time (for a general definition of
dynamical systems, encompassing both cases, see \cite{HS74}). A
discrete-time dynamical system is defined by the iteration of a map
$f:\mathbb{R}^{d}\rightarrow\mathbb{R}^{d}$, while a continuous-time
system is defined by an ordinary differential equation (ODE)
$x^{\prime}=f(x)$. Common to both classes of systems is the notion
of trajectory: in the discrete-time case, a \emph{trajectory}
starting at the
point $x_{0}$ is the sequence
\[
x_{0},f(x_{0}),f(f(x_{0})),\ldots,f^{[k]}(x_0),\ldots
\]
where $f^{[k]}$ denotes the $k$th iterate of $f$, while in the
continuous time case it is the solution, a function $\phi (f,
x_0)(\cdot)$ of time $t$,  to the following
initial-value problem
\[
\left\{
\begin{array}
[c]{l}
x^{\prime}=f(x)\\
x(0)=x_{0}
\end{array}
\right.
\]

A trajectory may converge to some \emph{attractor}. Attractors are
invariant sets in the sense that if a trajectory reaches an
attractor, it stays there.
Given an attractor $A$, its \emph{basin of attraction} is the set
\[
\{x\in\mathbb{R}^{d}|\text{ the trajectory starting at }x\text{
converges to }A \text{ as } t\rightarrow\infty \}
\]
Attractors come in different varieties: they can be points, periodic
orbits, strange attractors, etc. The equilibrium points (also called
\textit{fixed points} in the discrete case) are the simplest
attractors. If there is a neighborhood around an equilibrium point
$s$ which is contained in the basin of attraction of $s$ (i.e.\
every trajectory starting in this neighborhood will converge to
$s$), then $s$ is called a sink.

\section{Robustness of non-computability - definitions}

\label{Sec:RobustnessNoncomputability}

We now introduce several notions of robust non-computability. In
analysis and dynamical systems, perturbations (and thus robustness
under perturbations) are typically described and measured in terms
of certain norm, metric, or topology on the space under
consideration. We follow the classical approach in the following
definitions.

\begin{definition}\label{Def:robust} Let $X$ and $Y$ be represented spaces; let
$\Phi: X\to Y$ be a map; and let $\tau_{X}$ be a topology on $X$.
Assume that $x_c$ is a computable element of $X$ and
$\Phi(x_c)=y_{nc}$ is a non-computable element of $Y$.
\begin{itemize}
\item[(1)] $\Phi$ is called $\tau_{X}$-robustly non-computable at $x_c$ if
there is a $\tau_{X}$-neighborhood $U$ of $x_c$ in $X$, i.e., $U\in
\tau_{X}$ and $U$ contains $x_c$, such that for any $x\in U$,
$\Phi(x)$ is non-computable.
\item[(2)] $\Phi$ is called $\tau_{X}$-semi-robustly  non-computable at $x_c$ if for
any $\tau_{X}$-neighborhood $U$ of $x_c$, there exists a computable
element $x_U\in U$ such that $x_U\neq x_c$ and $\Phi(x_U )$ is
non-computable.
\item[(3)] Let $A$ be a subset of $X$. Then $\Phi$ is called
$A$-removably non-computable at $x_c$ with respect to $\tau_{X}$ if
there is a $\tau_{X}$-neighborhood $U$ of $x_c$ such that $U\cap A$
is dense in $U$ and $\Phi$ is computable on $(U\cap A)\setminus \{
x_c\}$. If $(U\cap A)\setminus \{ x_c\} = U\setminus \{ x_c\}$, then
$\Phi$ is called removably non-computable at $x_c$.
\end{itemize}
\end{definition}

Recall that every represented space has a set of computable points
corresponding to computable points in $\Sigma^{\mathbb{N}}$ via the
representation.

By Definition \ref{Def:robust}, if the topology $\tau_X$ is not
discrete and $\Phi$ is $\tau_{X}$-robustly non-computable at $x_c$,
then the non-computability of $\Phi$ at $x_c$ is a ``continuous"
property near $x_c$ with respect to $\tau_{X}$; in this case, the
non-computability is intrinsic and persistent in a neighborhood of
$x_c$. If $\tau_X$ is induced by a norm or a metric, then the
non-computability is persistent under small perturbations.

It is readily seen that if $\Phi$ is robustly non-computable at
$x_c$, then it is semi-robustly non-computable at $x_c$ provided
that every neighborhood of $x_c$ contains a computable point
different from $x_c$; the converse is false; and $\Phi$ cannot be
removably non-computable at $x_c$ if it is semi-robustly
non-computable at $x_c$.

A notational matter. In the rest of the paper, we will use the
standard topology on $\mathbb{R}^d$. For the function spaces $C(W)$
and $C^1(W)$, we will use the uniform topologies induced by $\|
\cdot\|$ and $\| \cdot \|_1$, respectively (see section
\ref{Subsec:comptanalysis} for the definitions of these norms).
Therefore, in the rest of the paper, we will omit the prefix
$\tau_X$ in the definition of robust, of semi-robust, and of
removable non-computability for operators defined on those spaces.

We now present an example of each type of non-computable function
according to Definition \ref{Def:robust}.

\begin{example}
\begin{itemize}
\item[(a)] The constant function $g: [-1, 1] \to \mathbb{R}$, $x\mapsto
\alpha$, is robustly non-computable, where $\alpha$ is a
non-computable real.
\item[(b)] The function $h: \mathbb{R}\to
\mathbb{R}$ is semi-robustly non-computable, where
\[ h(x) =  \left\{ \begin{array}{ll}
                1 & \mbox{if $x$ is a rational number} \\
                \alpha & \mbox{if $x$ is an irrational number}
                \end{array} \right. \]
The non-computability in $h$ is not removable at any real number,
and $h$ is not robustly non-computable.
\item[(c)] The following
is an example of a function that is computable everywhere except at
$x=0$; in other words, the function is removably non-computable at
$x=0$. Let $a:\mathbb{N\rightarrow N}$ be a one
    to one recursive function generating a recursively enumerable noncomputable
    set $A$. Then it is well known (see e.g. \cite[pp. 16--17]{BR89}) that the
    real number
    \[
    s=\sum_{m=0}^{\infty}2^{-a(m)}
    \]
    exists and is noncomputable. Let us consider the piecewise affine map
    $\phi:\mathbb{R\rightarrow R}$ defined as follows:

    \begin{enumerate}
        \item $\phi(x)=2^{-a(0)}+2^{-a(1)}$ for $x\geq1$;

        \item The graph of $\phi$ on $[1/(n+1),1/n]$, for any $n\in\mathbb{N}
        \backslash\{0\}$, is the line segment between the points $(1/(n+1),\sum
        _{m=0}^{n+1}2^{-a(m)})$ and $(1/n,\sum_{m=0}^{n}2^{-a(m)})$;

        \item $\phi(0)=\sum_{m=0}^{\infty}2^{-a(m)}$;

        \item $\phi(x)=\phi(-x)$ for $x<0$.
    \end{enumerate}

    It is not difficult to see that $\phi$ is continuous. It follows from its
    definition that $\phi$ is uniformly computable in $\mathbb{R}\backslash\{0\}$,
    which implies that for any computable $x\in\mathbb{R}\backslash\{0\}$ one has
    that $\phi(x)$ is computable. Furthermore $\phi(0)$ is not computable, which
    implies that $\phi$ is removably non-computable
    at $0$.
\end{itemize}
\end{example}

\section{Non-computability of differentiation}
\label{sec:Non-computabilityDifferentiation}

It is well-known that the differentiation operator is
non-computable. In this section, we discuss the robustness of
non-computability in differentiation.

We begin with the compact domain $[a, b]$, where $a$ and $b$ are
computable reals. Since the set of polynomials with rational
coefficients is dense in $C^1([a, b])$ and the derivative of a
polynomial with rational coefficients is again a polynomial with
rational coefficients (thus computable), it follows that the
differentiation $D: C^1([a, b]) \to C([a, b])$ cannot be robustly
non-computable at any function in $C^1([a, b])$. On the other hand,
it is possible for $D$ to be semi-robustly non-computable at some
functions in $C^1([a, b])$. To see that $D$ is semi-robustly
non-computable at some functions in $C^1([a, b])$, we note that in
\cite[Theorem 1 on page 51]{PR89} it is shown that there is a
computable function $f\in C^1([0, 1])$ such that $f^{\prime}$ is not
computable. For any neighborhood $U$ of $f$ in $C^1[0, 1]$, there is
rational number $\varepsilon >0$ such that the closed ball
$\bar{B}(f,\varepsilon)=\{g\in C^1([a,
b]):\|f-g\|_1\leq\varepsilon\}\subseteq U$. It is clear that
$f+\varepsilon \in \bar{B}(f,\varepsilon)$, $f+\varepsilon$ is
computable, and $(f+\varepsilon)^{\prime} = f^{\prime}$ is
non-computable. Consequently, $D$ is semi-robustly non-computable at
$f$.

The next theorem shows that differentiation can be
robustly non-computable if the compact domain $[a, b]$ is replaced
by $\mathbb{R}$.

\begin{theorem} \label{th:DifferentiationEverywhereNoncomputable}
There exists a function $f\in C^{\infty}(\mathbb{R})$ such that $f$
is computable and $D: C^1(\mathbb{R}) \to C(\mathbb{R})$, $f\mapsto
f^{\prime}$, is robustly non-computable at $f$.
\end{theorem}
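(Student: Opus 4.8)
The plan is to define $f$ through its derivative, \emph{decoupling} the height of the derivative's features (kept fixed, to guarantee robustness) from their area (made summable, to guarantee computability of $f$), and to push all of the non-computability out to infinity. Fix a one-to-one recursive function $a:\N\to\N$ whose range $A$ is recursively enumerable but not recursive (as in the example of the non-computable real $s=\sum_m 2^{-a(m)}$). Let $\beta\in C^{\infty}(\R)$ be a fixed \emph{computable} bump supported on $[0,1]$ with $0\le\beta\le 1$, $\beta(0)=\beta(1)=0$ and $\max\beta=1$. For each $j\in\N$ I place a copy of $\beta$ of width $2^{-j}$ at the even integer $2a(j)$, setting $f'(x)=\sum_{j=0}^{\infty}\beta(2^{j}(x-2a(j)))$ and letting $f$ be the antiderivative with $f\equiv 0$ to the left of all features. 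Because $a$ is injective, the points $2a(j)$ are distinct even integers, the supports $[2a(j),2a(j)+2^{-j}]$ are pairwise disjoint, and the family is locally finite; hence $f\in C^{\infty}(\R)$.

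First I would verify that $f$ is computable. The area of the $j$th feature is $c\,2^{-j}$ with $c=\int_0^1\beta$, so $0\le f\le 2c$, and to approximate $f$ on $\overline{B}(0,2^{l})$ within $2^{-k}$ it suffices to add the finitely many terms with $j\le k+O(1)$, each obtained by reading $a(j)$ off the enumeration and integrating $\beta$; the omitted tail contributes less than $\sum_{j>k+O(1)}c\,2^{-j}<2^{-k}$. The key point is that, although membership in $A$ is undecidable, computing $f$ to finite precision uses only the first few \emph{enumerated} values $a(0),a(1),\dots$, which are always available. Thus $f$ is a computable element of $C^{\infty}(\R)$.

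The heart of the matter is robustness. I would fix a rational $\varepsilon$ with $0<\varepsilon<\tfrac12$, take $U=\{g:\|g-f\|_1<\varepsilon\}$, and argue by contradiction that no $g\in U$ has a computable derivative. Suppose $g\in U$ and $g'$ is computable; then $\|g'-f'\|\le\|g-f\|_1<\varepsilon$, and as a computable element of $C(\R)$ the function $g'$ carries a computable modulus of continuity on each ball $\overline{B}(0,2^{l})$. Fix $l$. For every $j$ with $2a(j)\le 2^{l}$, the value $f'(2a(j))=0$ while $f'$ reaches $1$ at the peak of the $j$th bump, within horizontal distance $2^{-j}$; consequently $g'$ increases by more than $1-2\varepsilon>0$ over an interval of width at most $2^{-j}$. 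The computable modulus of continuity of $g'$ on $\overline{B}(0,2^{l})$ then supplies a computable lower bound on the width needed to realize such an increase, forcing $2^{-j}\ge 2^{-J_l}$, i.e.\ $j\le J_l$, for a computable $J_l$. Hence every $m\in A$ with $m\le 2^{l-1}$ satisfies $m=a(j)$ for some $j\le J_l$, so $A\cap[0,2^{l-1}]\subseteq\{a(0),\dots,a(J_l)\}$ is decidable for each $l$; letting $l\to\infty$ would make $A$ recursive, a contradiction. Therefore $g'$ is non-computable for every $g\in U$, and $D$ is robustly non-computable at $f$.

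The step I expect to be the main obstacle is exactly the reconciliation of ``$f$ computable'' with ``$f'$ \emph{robustly} non-computable,'' which the earlier discussion shows must fail on a compact interval. Any scheme that makes $g'$ encode $\chi_A$ \emph{locally} would, upon integrating, make $f$ itself non-computable on a bounded set. The resolution is to keep the derivative features of \emph{fixed} height (so a perturbation of size $\varepsilon<\tfrac12$ cannot erase them) while letting their areas decay like $2^{-j}$ (so $f$ converges computably), and to locate the obstruction at infinity: it is the \emph{degrading} modulus of continuity of any putative computable $g'$ over larger and larger balls that forces the stage bound $J_l$ and hence decidability of $A$. The care required is in making the ``rise over small width'' estimate uniform in $l$ and in ensuring the gaps between features (guaranteed here by placing bumps at distinct even integers) so that $f'$, and hence $g'$ up to $\varepsilon$, genuinely vanishes at each left edge.
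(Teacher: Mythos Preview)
Your construction and argument are correct, but the route differs from the paper's. The paper uses the Pour-El--Richards function $f(x)=\sum_{k\ge 2}(1/k)\psi(k^2(x-a(k)))$, for which $f'(n)=k\,\phi'(-1/2)$ when $n=a(k)$ and $f'(n)=0$ otherwise; thus $f'$ is \emph{unbounded}, and robustness follows by a direct pointwise test: for any $g$ with $\|g-f\|_1<1/\alpha$ (where $\alpha\,\phi'(-1/2)>3$) one has $g'(n)/\phi'(-1/2)\ge 5/3$ for $n\in A\setminus\{a(0),a(1)\}$ and $<1/3$ for $n\notin A$, so computability of $g'$ would decide $A$ simply by evaluating $g'$ at integers. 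You instead keep $f'$ bounded (height-one bumps) and push the unboundedness into $f''$ via widths $2^{-j}$; membership in $A$ is then recovered not from the values of $g'$ but from its computable \emph{modulus of continuity} on the balls $\overline{B}(0,2^l)$, which bounds the index $j$ of any bump landing in that ball. Both proofs ultimately exploit unboundedness at infinity---of $f'$ in the paper, of its oscillation rate in yours---which is exactly what separates $\R$ from a compact interval. The paper's version is shorter and needs only pointwise evaluation; yours makes the non-compactness mechanism explicit and dovetails with your closing remarks about why any local encoding must fail.
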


\begin{proof} Again we make use of a function $f$ constructed by Pour-El and
Richards \cite[Remark A on page 58]{PR89}. We start by recalling the
construction of $f$. Let $\phi$ be a canonical $C^{\infty}$ bump
function:
\[ \phi(x) = \left\{ \begin{array}{ll}
e^{-x^2/(1-x^2)} & \mbox{for $|x|<1$} \\
0 & \mbox{for $|x|\geq 1$} \end{array} \right.
\]
Then $\phi \in C^{\infty}(\mathbb{R})$, $\phi$ has support on $[-1,
1]$, and $\phi(0)=1$. Let $\psi(x) =
\phi[x - (1/2)]$, so that $\psi^{\prime}(0) = \phi^{\prime}(-
1/2)>0$. Set $\psi_k(x) = (1/k) \cdot \psi (k^2x)$. As previously,
let $a: \mathbb{N}\to \mathbb{N}$ is a recursive function generating
a recursively enumerable nonrecursive set $A$ in a one-to-one
manner.  Define
\[ f(x) = \sum_{k=2}^{\infty} \psi_k(x - a(k)) \]
Then $f \in C^{\infty}$ and is computable on $\mathbb{R}$. Moreover,
\[ f^{\prime}(n) = \left\{ \begin{array}{ll}
                          k\phi^{\prime}(- 1/2) & \mbox{if $n=a(k)$}
                          \\
                          0 & \mbox{otherwise} \end{array} \right.
                          \]
Pour-El and Richards showed that $f^{\prime}$ cannot be a computable
function by way of a contradiction.  Suppose $f^{\prime}$ were
computable on $\mathbb{R}$. Then the sequence $\{ f^{\prime}(n)\}$
is computable, which would imply that the sequence $\{ \mu_n\}$
\[ \mu_{n} = \frac{f^{\prime}(n)}{\phi^{\prime}(- 1/2)} \]
is also computable. Since $n\in A$ if and only if $n=a(0)$ or
$n=a(1)$ or $\mu_n\geq 2$, it follows that $A$ would be recursive if
the sequence $\{ \mu_n\}$ is computable. We have arrived at a
contradiction.

To show that $D$ is robustly non-computable at $f$, we need to
find a $C^1$-neighborhood $U$ of $f$ such that $g^{\prime}$ is
non-computable for every $g\in U$. We start by choosing a positive
rational number $\alpha$ such that $\alpha \cdot \phi^{\prime}(-
1/2) > 3$. Set $U = \{ g\in C^1(\mathbb{R}): \|f-g\|_1 <
\frac{1}{\alpha} \}$. For any $g\in U$, let
\[ \mu^{g}_{n} = \frac{g^{\prime}(n)}{\phi^{\prime}(- 1/2)} \]
Consequently, if $n=a(k)$ for some $k\geq 2$, then
\[ \mu^{g}_{n} = \frac{g^{\prime}(n)}{\phi^{\prime}(- 1/2)} \geq
\frac{f^{\prime}(n) - (1/\alpha)}{\phi^{\prime}(- 1/2)} =
\frac{k\phi^{\prime}(- 1/2) - (1/\alpha)}{\phi^{\prime}(- 1/2)} \geq
2 - \frac{1}{3}=\frac{5}{3} \] On the other hand, if $n\neq a(k)$
for all $k$, then
\[ \mu^{g}_{n} = \frac{g^{\prime}(n)}{\phi^{\prime}(- 1/2)} \leq
\frac{f^{\prime}(n)+ (1/\alpha)}{\phi^{\prime}(- 1/2)} =
\frac{1}{\alpha \phi^{\prime}(- 1/2)} < \frac{1}{3} \] In other
words, $n\in A$ if and only if one of the following holds: $n=a(0)$
or $n=a(1)$ or $\mu^{g}_{n}\geq \frac{5}{3}$. Since $A$ is
nonrecursive, the sequence $\{ \mu^{g}_{n}\}$ cannot be computable,
which implies that the sequence $\{ g^{\prime}(n)\}$ cannot be
computable. Hence, $g^{\prime}$ is not a computable function.
\end{proof}

\section{Robust non-computability}

\label{sec:EverywhereNoncomputable}

In this section, we discuss  the map that assigns to $(f, s)$ the
basin of attraction of $s$, where $f$ is a 3-dimensional $C^1$
discrete-time evolution function and $s$ is a sink of $f$. We show
that the map can be robustly non-computable at an analytic and
computable function $f$.

In \cite{GZ15} an analytic and computable function $f: \mathbb{R}^3 \to
\mathbb{R}^3$ is explicitly constructed with the following
properties:
\begin{itemize}
\item[(a)] the restriction, $f_{M}: \mathbb{N}^3 \to \mathbb{N}^3$, of $f$ on
$\mathbb{N}^3$ is the transition function of a universal Turing
machine $M$, where each configuration of $M$ is coded as an element
of $\mathbb{N}^3$ (see \cite{GZ15} for an exact description of the
coding). Without loss of generality, $M$ can be assumed to have just
one halting configuration; e.g. just before ending, set all
variables to $0$ and go to some special line with a command break;
thus the final configuration is unique;
\item[(b)] the halting configuration $s$ of $M$ is a computable
sink of the discrete-time evolution of $f$ (see Section
\ref{Subsec:DynamicalSystem} for the definition of sink).
\item[(c)] the basin of attraction of $s$ is non-computable (see Section
\ref{Subsec:DynamicalSystem} for the definition of basin of
attraction).
\item[(d)] there exists a constant $\lambda \in (0, 1)$ such that if $x_0$
is a configuration of $M$, then for any $x\in \mathbb{R}^3$,
\begin{equation} \label{f-contraction}
\| x - x_0\| \leq 1/4 \quad \Longrightarrow \quad \| f(x) -
f(x_0)\|\leq \lambda \| x - x_0\|
\end{equation}
\end{itemize}

In the remaining of this section, the symbols $f$ and $s$ are
reserved for this particular function and its particular sink - the
halting configuration of the universal Turing machine $M$ whose
transition function is $f_M$.

We show in the following that there is a $C^1$-neighborhood
$\mathcal{N}$ of $f$ -- computable from $f$ and $Df(s)$ -- such that
for every $g\in \mathcal{N}$, $g$ has a sink $s_{g}$ -- computable
from $g$ -- and the basin of attraction of $s_{g}$ is
non-computable. We begin with two propositions. (Note that the
particular function $f$ is exactly the function constructed in
\cite{GCB08} mentioned in the following proposition.)

\begin{proposition} \label{Turing-robustness} (\cite[p. 333]{GCB08}) Let $0< \delta < \epsilon < \frac{1}{2}$.
The extension from $f_{M}$ to $f$ is robust to perturbations in the
following sense: for all $g: \mathbb{R}^3 \to \mathbb{R}^3$ such
that $\| f - g\|\leq \delta$, for all $j\in \mathbb{N}$, and for all
$\bar{x}_0\in \mathbb{R}^3$ satisfying $\| \bar{x}_0 - x_0\| \leq
\epsilon$, where $x_0\in \mathbb{N}^3$ represents an initial
configuration,
\begin{equation} \label{perturbation}
\| g^{[j]}(\bar{x}_0) - f_{M}^{[j]}(x_0)\| \leq \epsilon
\end{equation}
\end{proposition}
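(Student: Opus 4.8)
The plan is to treat the statement as a shadowing estimate and to prove it by induction on the iteration count $j$, showing that the perturbed orbit stays uniformly close to the exact Turing-machine orbit. Write $x_j = f_M^{[j]}(x_0)$ for the exact trajectory; since $f_M$ sends configurations to configurations, every $x_j$ lies in $\mathbb{N}^3$ and $f(x_j)=f_M(x_j)=x_{j+1}$. Write $y_j = g^{[j]}(\bar{x}_0)$ for the perturbed trajectory. The goal is the invariant $\|y_j - x_j\|\leq \epsilon$ for all $j\in\mathbb{N}$, which is precisely \eqref{perturbation}.

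The base case $j=0$ is the hypothesis $\|\bar{x}_0-x_0\|\leq\epsilon$. For the inductive step, assume $\|y_j-x_j\|\leq\epsilon$ and split the one-step error by the triangle inequality:
\[ \|y_{j+1}-x_{j+1}\| = \|g(y_j)-f(x_j)\| \leq \|g(y_j)-f(y_j)\| + \|f(y_j)-f(x_j)\|. \]
The first term is at most $\|f-g\|\leq\delta$ by the uniform perturbation bound. For the second term I would invoke the contraction property (d): because $x_j$ is a configuration in $\mathbb{N}^3$, once $y_j$ lies within the contraction ball of $x_j$ we obtain $\|f(y_j)-f(x_j)\|\leq\lambda\|y_j-x_j\|$, and since $f(x_j)=x_{j+1}$ this already forces $\|f(y_j)-x_{j+1}\|\leq\lambda/4<1/4$. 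Thus, once the error has entered the $1/4$-regime, the recursion $\|y_{j+1}-x_{j+1}\|\leq\delta+\lambda\|y_j-x_j\|$ has attracting fixed point $\delta/(1-\lambda)$; for the contraction rate $\lambda$ supplied by the construction this fixed point stays within the $1/4$-ball and below $\epsilon$, so the invariant is self-maintaining for every subsequent $j$.

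The step I expect to be the main obstacle is the very first iteration, where the radius hypothesis $\|\bar{x}_0-x_0\|\leq\epsilon$ permits $\epsilon$ up to $1/2$, whereas the estimate (d) is asserted only on the ball of radius $1/4$. Closing this gap is really the content of the proposition and requires the error-correcting design of $f$ on the full half-unit neighborhood of a configuration (the configurations being unit-spaced integer points, each has a Voronoi cell of radius $1/2$): one must show that $f$ maps the entire ball $\|x-x_j\|\leq\epsilon<1/2$ into the contraction ball $\|\,\cdot\,-x_{j+1}\|\leq 1/4$ about the successor, so that after adding the perturbation $\delta<\epsilon$ the error drops into the regime where (d) applies and then, by the previous paragraph, remains at most $\epsilon$ uniformly in the initial configuration. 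This reduction — from the half-unit neighborhood down into the quarter-unit contraction ball — is exactly where the explicit construction of $f$ from \cite{GCB08}, rather than property (d) alone, has to be used; the remaining bookkeeping is the routine geometric-series summation of the accumulated perturbations.
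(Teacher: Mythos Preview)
The paper does not supply its own proof of this proposition: it is quoted verbatim as a result of \cite{GCB08}, so there is nothing in the present paper to compare your argument against. Your induction-on-$j$ shadowing framework is exactly the natural (and, as far as one can tell from the citation, the intended) route.

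That said, there is a quantitative slip in your bookkeeping that is worth flagging. You write that once the orbit is in the $1/4$-regime the recursion $e_{j+1}\le \delta+\lambda e_j$ has fixed point $\delta/(1-\lambda)$ and that ``for the contraction rate $\lambda$ supplied by the construction this fixed point stays \ldots\ below $\epsilon$.'' But the proposition is asserted for \emph{every} pair $0<\delta<\epsilon<\tfrac12$, and for any fixed $\lambda\in(0,1)$ the inequality $\delta/(1-\lambda)\le\epsilon$ fails once $\delta>(1-\lambda)\epsilon$. So property~(d) alone --- a Lipschitz contraction with some $\lambda>0$ on the quarter-ball --- cannot carry the induction even after the first step; the difficulty you isolate at $j=0$ in fact recurs at every step. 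What the construction in \cite{GCB08} actually provides (and what you gesture at in your last paragraph) is a much stronger error-correction: the analytic $f$ is built so that the image of the entire half-ball $\|x-x_0\|<\tfrac12$ around a configuration lands essentially at the successor configuration, with residual error small enough that adding $\delta$ still keeps the total below $\epsilon$. In other words, the construction-specific input is needed not just to bootstrap into the $1/4$-ball but throughout the induction, replacing your $\lambda e_j$ term by something that does not scale with $\epsilon$. Your diagnosis of \emph{where} the external input enters is right; the scope of that input is a bit larger than your write-up suggests.
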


\begin{proposition}  \label{hyperbolic-robustness} 
There exist a neighborhood $U\subset \mathbb{R}^3$ of $s$ and a
$C^1$-neighborhood $\mathcal{N}$ of $f$ such that for any $g\in
\mathcal{N}$ there is a unique equilibrium $s_{g}\in U$ of $g$.
Moreover, for any $\epsilon
>0$ one can choose $\mathcal{N}$ so that $\| s_{g}-s\| < \epsilon$.
\end{proposition}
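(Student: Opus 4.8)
The plan is to prove this via the Implicit Function Theorem applied to the equation defining equilibria. Recall that $s$ is a sink of the discrete-time system $f$, which means $s$ is a fixed point of $f$, i.e., $f(s)=s$. Equivalently, $s$ is a zero of the map $F(x)=f(x)-x$. Since $s$ is a sink and the construction in \cite{GZ15} makes $f$ analytic with $s$ an attracting fixed point, I expect the linearization $Df(s)$ to have all eigenvalues strictly inside the unit disk; in particular the contraction property (\ref{f-contraction}) guarantees $\|Df(s)\|\leq\lambda<1$. Consequently $DF(s)=Df(s)-I$ is invertible (its eigenvalues are $\mu_i-1$ where $|\mu_i|<1$, so none vanishes). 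This invertibility is exactly the hypothesis needed to run the Implicit Function Theorem in a parametrized form.

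First I would set up a parametrized root-finding problem. Rather than treating $g$ abstractly, I would consider the map $\mathcal{F}:C^1(\mathbb{R}^3)\times\mathbb{R}^3\to\mathbb{R}^3$ given by $\mathcal{F}(g,x)=g(x)-x$, noting $\mathcal{F}(f,s)=0$ and that the partial derivative of $\mathcal{F}$ in the $x$-variable at $(f,s)$ is $DF(s)=Df(s)-I$, which is invertible by the previous paragraph. The evaluation-type map $(g,x)\mapsto g(x)$ is continuous when $C^1(\mathbb{R}^3)$ carries the $C^1$-norm, and moreover its derivative in $x$ depends continuously on $g$, so the standard quantitative version of the Implicit Function Theorem (or equivalently a Banach fixed-point/contraction-mapping argument for the Newton-type operator $x\mapsto x-(DF(s))^{-1}(g(x)-x)$) yields a neighborhood $U$ of $s$ in $\mathbb{R}^3$ and a $C^1$-neighborhood $\mathcal{N}$ of $f$ such that for every $g\in\mathcal{N}$ there is a \emph{unique} $s_g\in U$ with $g(s_g)=s_g$. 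This gives both existence and uniqueness in one stroke.

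For the final sentence — that $\mathcal{N}$ can be chosen so that $\|s_g-s\|<\epsilon$ for a prescribed $\epsilon>0$ — I would extract a Lipschitz-type estimate from the fixed-point construction. Writing $s_g-s=(DF(s))^{-1}\bigl((g-f)(s_g)+(\text{higher order in }s_g-s)\bigr)$ and using that $\|g-f\|_1$ is small for $g\in\mathcal{N}$, one obtains a bound of the form $\|s_g-s\|\leq C\,\|g-f\|_1$ for a constant $C$ depending only on $\|(DF(s))^{-1}\|$ and the local modulus of continuity of $Df$ near $s$. Shrinking $\mathcal{N}$ so that $\|g-f\|_1<\epsilon/C$ then forces $\|s_g-s\|<\epsilon$. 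Since the sink $s$ and the linear map $Df(s)$ are computable (by property (b) of the construction and analyticity of $f$), both $U$ and $\mathcal{N}$ can be taken with computable data, which is what the surrounding discussion anticipates.

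The main obstacle I anticipate is handling the analytic subtleties of the $C^1$-norm on the noncompact domain $\mathbb{R}^3$: the Implicit Function Theorem is inherently local, so I must confine all estimates to a fixed compact ball $\overline{B}(s,r)$ around the sink and verify that the contraction constant and invertibility bounds are uniform there. The genuinely local character of equilibria makes this unproblematic — I only need $g$ close to $f$ in $C^1$-norm on that one ball, which the $C^1$-norm on $\mathbb{R}^3$ certainly controls — but care is required to ensure $s_g$ stays inside $U$ (so that no spurious equilibria of $g$ outside $U$ interfere with the uniqueness claim, which is asserted only \emph{within} $U$). I would also want to confirm that $s_g$ is itself a sink of $g$, which follows because $Dg(s_g)$ is $C^0$-close to $Df(s)$ and hyperbolicity (eigenvalues inside the unit disk) is an open condition; however, strictly speaking the statement of the proposition only asserts that $s_g$ is an equilibrium, so I would defer the ``sink'' verification to wherever the basin-of-attraction argument later requires it.
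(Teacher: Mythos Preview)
Your proposal is correct and follows essentially the same route as the paper: the paper does not supply a detailed proof but simply observes that the proposition is the discrete-time analogue of \cite[Theorem~1, p.~305]{HS74}, whose proof rests on the implicit/inverse function theorem with the key hypothesis being the invertibility of the linearization at $s$. You have correctly identified that in the discrete-time setting the relevant operator is $Df(s)-I$ (rather than $Df(s)$ itself, as the paper's parenthetical remark somewhat loosely states), and your contraction-mapping argument with the explicit Lipschitz estimate $\|s_g-s\|\leq C\|g-f\|_1$ is exactly the standard way to make the conclusion quantitative.
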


The proposition \ref{hyperbolic-robustness} is a discrete-time
version of Theorem 1 in \cite[p. 305]{HS74}. The proof of this
theorem has nothing to do with differential equations; rather, it
depends on the invertibility of $Df(s)$ (the invertibility of
$Df(s)$ follows from the fact that $s$ is a sink).

\begin{remark} \label{k_0} Applying a computable version of the inverse function theorem,
it can be further shown that from $f$ and $Df(s)$ one can compute an
integer $k_0$ and then two computable functions $\eta: \{
k\in\mathbb{N}: \, k\geq k_0\} \to \mathbb{N}$ (assuming $\eta(k)>k$
without loss of generality) and $F: \mathcal{N} \to \mathbb{R}^3$
such that for every $g\in \mathcal{N}$, $F(g)=s_{g}$, $s_{g}$ is a
unique equilibrium point of $g$ in the closure of $B(s, 1/k_0)$,
$s_{g}$ is a sink, and $\| s - s_g\| < 1/k$,  where $\mathcal{N}$ is
the $\frac{1}{\eta(k)}$-neighborhood (in $C^1$-norm) of $f$.
\end{remark}

\begin{theorem} \label{noncomputability-robustness}
There is a $C^1$-neighborhood $\mathcal{N}$ of $f$ (computable from
$f$ and $Df(s)$) such that for any $g\in \mathcal{N}$, $g$ has a
sink $s_g$ (computable from $g$) and the basin of attraction $W_g$
of $s_g$ is non-computable.
\end{theorem}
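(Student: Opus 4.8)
The plan is to choose the neighborhood $\mathcal{N}$ so small that, for every $g \in \mathcal{N}$, the basin $W_g$ of the perturbed sink $s_g$ encodes the halting set of $M$ exactly as $W_s$ does for $f$; the non-computability of $W_g$ then follows from the non-recursiveness of that set. Concretely, I would first fix $\epsilon \in (0, 1/2)$ and $\delta \in (0, \epsilon)$, and then take $\mathcal{N}$ to be a $C^1$-neighborhood of $f$ obtained from Remark \ref{k_0} (so that $F(g) = s_g$ is a sink computable from $g$, with $\|s - s_g\|$ as small as desired, say $< 1/k$), shrunk further so that (i) $\|f - g\|_1 < \delta$, which forces $\|f - g\| < \delta$ and makes Proposition \ref{Turing-robustness} applicable with these $\delta, \epsilon$, and (ii) $Dg$ is uniformly close to $Df$ on a fixed ball about $s$. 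Since $s$ is a configuration, the contraction estimate (\ref{f-contraction}) gives $\|Df\| \le \lambda < 1$ on $\overline{B}(s, 1/4)$, so by (ii) there exist $r \in (0, 1/4]$ (say $r = 1/8$) and $\lambda' < 1$ with $\|g(x) - s_g\| = \|g(x) - g(s_g)\| \le \lambda'\|x - s_g\|$ for all $x \in \overline{B}(s_g, r)$; thus $\overline{B}(s_g, r)$ is forward-invariant under $g$ and every orbit in it converges to $s_g$. I would arrange the constants so that $\epsilon + 1/k < r$.

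The heart of the argument is the following equivalence, established for each $g \in \mathcal{N}$, each initial configuration $x_0 \in \mathbb{N}^3$, and every $\bar{x}_0$ with $\|\bar{x}_0 - x_0\| \le \epsilon$. If $M$ halts on $x_0$, say at step $T$, then $f_M^{[j]}(x_0) = s$ for all $j \ge T$, so the tracking estimate (\ref{perturbation}) yields $\|g^{[j]}(\bar{x}_0) - s\| \le \epsilon$ for $j \ge T$; hence $\|g^{[T]}(\bar{x}_0) - s_g\| \le \epsilon + 1/k < r$, and by the local contraction just described $g^{[j]}(\bar{x}_0) \to s_g$, i.e. $\bar{x}_0 \in W_g$. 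Conversely, if $M$ does not halt on $x_0$, then $f_M^{[j]}(x_0)$ is a configuration distinct from $s$ for every $j$, and since distinct points of $\mathbb{N}^3$ are at max-norm distance at least $1$, we get $\|f_M^{[j]}(x_0) - s\| \ge 1$; combining with (\ref{perturbation}) and $\|s - s_g\| < 1/k$ gives $\|g^{[j]}(\bar{x}_0) - s_g\| \ge 1 - \epsilon - 1/k > 0$ for all $j$, so the orbit is bounded away from $s_g$ and $\bar{x}_0 \notin W_g$. Letting $\bar{x}_0$ range over $B(x_0, \epsilon)$, this shows $B(x_0, \epsilon) \subseteq W_g$ when $M$ halts on $x_0$ and $B(x_0, \epsilon) \cap W_g = \emptyset$ otherwise.

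Finally I would convert this into non-computability. Let $H = \{x_0 \in \mathbb{N}^3 : M \text{ halts on } x_0\}$, which is non-recursive. The previous step shows that the distance function $d = d_{\mathbb{R}^3 \setminus W_g}$ of the closed complement satisfies $d(x_0) \ge \epsilon$ for $x_0 \in H$ and $d(x_0) = 0$ for $x_0 \notin H$. If $W_g$ were computable, then $\mathbb{R}^3 \setminus W_g$ would be a computable closed set and $d$ a computable function; approximating $d(x_0)$ within $\epsilon/2$ and comparing with $\epsilon/2$ would then decide membership in $H$, contradicting its non-recursiveness. Hence $W_g$ is non-computable for every $g \in \mathcal{N}$, which is the robust non-computability asserted. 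I expect the main obstacle to be the forward direction — verifying that once a perturbed orbit has been driven $\epsilon$-close to the halting configuration $s$ it actually falls into the local basin of the perturbed sink $s_g$ and converges to it. This is where one must combine the configuration-contraction property (\ref{f-contraction}) with the $C^1$-closeness of $g$ to $f$ to produce a genuine local contraction toward $s_g$, and must choose $\epsilon$, $\delta$, $k$, and $r$ consistently and uniformly over $\mathcal{N}$ so that $\overline{B}(s, \epsilon)$ lies inside that local basin for every $g \in \mathcal{N}$.
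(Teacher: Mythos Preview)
Your proposal is correct and follows essentially the same strategy as the paper: choose $\mathcal{N}$ small enough that Proposition~\ref{Turing-robustness}, Remark~\ref{k_0}, and a local contraction near $s$ all apply, establish that $B(x_0,\epsilon)\subset W_g$ exactly when $M$ halts on $x_0$, and deduce non-computability of $W_g$ from the distance function. The only imprecision is the claim that (\ref{f-contraction}) yields $\|Df\|\le\lambda$ on all of $\overline{B}(s,1/4)$---it only gives contraction \emph{toward} the configuration $s$, hence $\|Df(s)\|\le\lambda$ at the center---but since you immediately shrink to a smaller radius $r$ and invoke $C^1$-continuity of $Df$ and $C^1$-closeness of $g$, the argument goes through; the paper handles this step via the estimate $\|g(x)-g(x_0)\|\le(\theta+\lambda)\|x-x_0\|$ obtained from the mean value inequality.
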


\begin{remark} The theorem may also be stated as follows: The map $C^1(\mathbb{R}^3) \to \mathcal{O}_3$, $g\mapsto W_g$, is
robustly non-computable at $f$, where $\mathcal{O}_d$ is the set
of all open subsets of $\mathbb{R}^d$ equipped with the topology
generated by the open rational balls (balls with rational centers
and rational radius) as a subbase.
\end{remark}

\begin{proof} Let
$0<\delta<\epsilon < \frac{1}{2}$ be the parameters given in
Proposition \ref{Turing-robustness} satisfying $0<\epsilon < 1/4$ and let $k_0$, $\eta$ be given as in Remark \ref{k_0}.
Pick a $k\geq k_0$ such that $0 < \frac{1}{k} <\epsilon/2$. Let
$\theta$ be a (rational) constant satisfying $0< \theta < \min\{
\delta, \frac{1-\lambda}{2}, \frac{1}{\eta(k)}\}$, where $\lambda
\in (0, 1)$ is the constant in (\ref{f-contraction}). Denote $\theta
+ \lambda$ as $\theta_{\lambda}$. Then $0< \theta_{\lambda} \leq
\frac{1-\lambda}{2}+\lambda = \frac{1+\lambda}{2} <1$. Let
$\mathcal{N}$ be the $\theta$-neighborhood of $f$ in $C^1$-norm.
Then for any $g\in \mathcal{N}$, any configuration $x_0\in
\mathbb{N}^3$ of $M$, and any $x\in B(x_0, 1/4)$, we have the
following estimate:
\begin{eqnarray} \label{g-contraction}
& & \| g(x) - g(x_0) \| \nonumber \\
& \leq & \| (g-f)(x) - (g-f)(x_0) \| + \| f(x) - f(x_0) \| \nonumber \\
& \leq & \| D(g-f)\|\, \|x-x_0\| + \lambda \| x - x_0 \| \nonumber \\
& < & (\theta + \lambda) \, \| x - x_0\|
\end{eqnarray}
Since $0< \theta + \lambda = \theta _{\lambda} <1$, it follows that
$g$ is a contraction in $B(x_0, 1/4)$ for every configuration $x_0$
of $M$. \\

We now show that for any $g \in \mathcal{N}$ and any configuration
$x_0$ of $M$,  $M$ halts on $x_0$ if and only if $x_0\in W_g$, where
$W_g$ denotes the basin of attraction of $s_g$. First we assume that
$x_0\in W_g$. Then $g^{[j]}(x_0)\to s_{g}$ as $j\to \infty$. Hence,
there exists $n\in \mathbb{N}$ such that $\| g^{[n]}(x_0) - s_{g}\|
< \frac{\epsilon}{5}$, which in turn implies that
\begin{eqnarray*}
& & \| f_{M}^{[n]}(x_0) - s \| \\
& \leq & \| f_{M}^{[n]}(x_0) - g^{[n]}(x_0) \| + \| g^{[n]}(x_0) -
s_{g} \| + \| s_{g} - s \| \\
& \leq & \epsilon + \frac{\epsilon}{5} + \frac{1}{k} < 2\epsilon
\end{eqnarray*}
Since $f_{M}^{[n]}(x_0), s \in \mathbb{N}^3$ and $2\epsilon < 1/2$
by assumption that $\epsilon < \frac{1}{4}$, it follows that
$f_{M}^{[n]}(x_0)=s$. Hence, $M$ halts on $x_0$. Next we assume that
$M$ halts on $x_0$. This assumption implies that there exists $n\in
\mathbb{N}$ such that $f_{M}^{[j]}(x_0)=s$ for all $j\geq n$. Then
for all $j\geq n$, it follows from Proposition
\ref{Turing-robustness} that
\begin{eqnarray*}
& & \| g^{[j]}(x_0) - s \| \\
& \leq & \| g^{[j]}(x_0)-f_{M}^{[j]}(x_0) \| + \| f_{M}^{[j]}(x_0) -
s \| \\
& = & \| g^{[j]}(x_0)-f_{M}^{[j]}(x_0) \| \leq \epsilon
\end{eqnarray*}
The inequality implies that $\{ g^{[j]}(x_0)\}_{j\geq n}\subset
\overline{B(s, \epsilon)}$. From the assumption that $s_g$ is an
equilibrium point of $g$, $\| s - s_g\| < \frac{1}{k}$ for every $g$
in the $\frac{1}{\eta(k)}$-neighborhood of $f$ (in $C^1$-norm),
$\theta\leq\frac{1}{\eta(k)}$ and $\frac{1}{k}<\epsilon$, it follows
that $g(s_g)=s_g$ and $s_g\in \overline{B(s, \epsilon)} \subset
\overline{B(s, 1/4)}$. Since $s$ is a configuration of $M$ -- the
halting configuration of $M$,  it follows from (\ref{g-contraction})
that $g$ is a contraction on $\overline{B(s, 1/4)}$. Thus, $\|
g^{[n+j]}(x_0) - s_g \| = \| g^{[n+j]}(x_0) - g^{[n+j]}(s_g) \| \leq
(\theta_{\lambda})^j \| g^{[n]}(x_0) - s_g \| \to 0$ as $j\to
\infty$. Consequently, $g^{[j]}(x_0) \to s_g$ as $j\to \infty$, This
implies that $x_0\in W_g$.

To prove that $W_g$ is non-computable, the following stronger
inclusion is needed: if $M$ halts on $x_0$, then $B(x_0,
\epsilon)\subset W_g$. Consider any $x\in B(x_0, \epsilon)$. Since
$x_0\in W_g$ and $g$ is a contraction on $B(x_0, \epsilon)$, it
follows that
\[ \| g^{[j]}(x) - g^{[j]}(x_0) \| \leq (\theta_{\lambda})^j \| x -
x_0\| \to 0 \quad \mbox{as $j\to \infty$} \] Since $x_0\in W_g$,
$g^{[j]}(x_0)\to s_g$ as $j\to \infty$. Hence, $g^{[j]}(x)\to s_g$
as $j\to \infty$. This implies that  $x\in W_g$.

It remains to show that $W_{g}$ is non-computable. Suppose otherwise
that $W_{g}$ were computable. Then the distance function
$d_{\mathbb{R}^3\setminus W_{g}}$ is computable. For every initial
configuration $x_0\in \mathbb{N}^3$, compute
$d_{\mathbb{R}^3\setminus W_{g}}(x_0)$ and halt the computation if
either $d_{\mathbb{R}^3\setminus W_{g}}(x_0)>\frac{\epsilon}{5}$ or
$d_{\mathbb{R}^3\setminus W_{g}}(x_0)<\frac{\epsilon}{4}$. Clearly
the computation will halt for every initial configuration $x_0$. Now
if $d_{\mathbb{R}^3\setminus W_{g}}(x_0)>\frac{\epsilon}{5}>0$ then
$x_0\in W_{g}$ or, equivalently, $M$ halts on $x_0$; otherwise, if
$d_{\mathbb{R}^3\setminus W_{g}}(x_0)<\frac{\epsilon}{4}$, then
$x_0\not\in W_{g}$ or, equivalently, $M$ doesn't halt on $x_0$. The
exclusion that $x_0\not\in W_{g}$ is derived from the fact that if
$x_0\in W_{g}$, then  $B(x_0, \epsilon)\subset W_{g}$; in other
words, $d_{\mathbb{R}^3\setminus W_{g}}(x_0)\geq \epsilon
> \frac{\epsilon}{4}$. Hence, if $W_{g}$ is computable, then so is
the halting problem. We arrive at a contradiction.
\end{proof}

\begin{remark} Theorem \ref{noncomputability-robustness} shows that
non-computability can be robust under the standard topological
structures in the study of natural phenomenons such as finding the
invariant sets of a dynamical system.
The robustness can occur in a very strong sense: the
non-computability nature of the basins of attraction persists, in a
continuous manner, for each and every function ``$C^1$ close to
$f$."
\end{remark}

\section{Semi-robust non-computability}

\label{Sec:RobustNoncomputability}

In this section, we continue our discussion with the map that
assigns to $(f, s)$ the basin of attraction of $s$ but for the
2-dimensional $C^1$ systems $dx/dt = f(x)$ defined on a compact disk
of $\mathbb{R}^2$. Unlike the 3-dimensional whole space case, we
show that this map is no longer robustly non-computable at any
$f$; but on the other hand, the map can be semi-robustly non-computable
at some $C^{\infty}$ functions.

We begin this section with several notional matters. Let $K$ denote
a closed disk of $\mathbb{R}^2$ centered at the origin with a
rational radius; in particular, let $\mathbb{D}$ denote the closed
unit disk of $\mathbb{R}^2$.  Let $\mathcal{V}(K)$ be the set of all
$C^1$ vector fields $K \to \mathbb{R}^2$ pointing inwards along the
boundary of $K$,
and let $\mathcal{O}_2$ be the set of all open subsets of
$\mathbb{R}^2$ equipped with the topology generated by the open
rational disks (disks with rational centers and rational radius) as
a subbase.

In order to show that the map assigns to $(f, s)$ the basin of
attraction of $s$ is not robustly non-computable at any $f\in
C^1(K)$, we need to identify a dense subset of $C^1(K)$ such that
the map is computable on this dense subset. This is where Peixoto's
well-known density theorem and the structurally stable systems come
into play. Peixoto's density theorem states that, on two-dimensional
compact manifolds, structurally stable systems form a dense open
subset in the set of all $C^1$ systems $\frac{dx}{dt} = f(x)$.
Hence, the map cannot be robustly non-computable at any $f\in
C^1(K)$ if we can show that the map is computable on the subset of
all structurally stable planar system. This is indeed the case as
shown in Theorem \ref{Th:ComputabilityBasins}.

We recall in passing that a planar dynamical system $dx/dt = f(x)$,
where $f\in C^1(K)$, is structurally stable if there exists some
$\varepsilon>0$ such that for all $g\in C^{1}(K)$ satisfying
$\left\Vert f-g\right\Vert _{1}\leq\varepsilon$, the trajectories of
$dy/dt = g(y)$ are homeomorphic to the trajectories of $dx/dt =
f(x)$, i.e. there exists some homeomorphism $h$ such that if
$\gamma$ is a trajectory of $dx/dt = f(x)$, then $h(\gamma)$ is a
trajectory of $dy/dt = g(y)$. Moreover, the homeomorphism $h$ is
required to preserve the orientation of trajectories by time.

For a structurally stable planar system $x^{\prime}=f(x)$ defined on
$K$, it has only finitely many equilibrium points and periodic
orbits, and all of them are hyperbolic. Recall (see Section
\ref{Subsec:DynamicalSystem}) that a point $x_{0} \in K$ is called
an equilibrium point of the system if $f(x)=0$, since
any trajectory starting at an equilibrium stays there for all $t\in\mathbb{R}
$. If all the eigenvalues of $Df(x_{0})$ have non-zero real parts,
then  $x_{0}$ is called a hyperbolic equilibrium.
If both eigenvalues of $Df(x_{0})$ have negative real parts, then it
can be shown that $x_{0}$ is a sink. A sink attracts nearby
trajectories. If both eigenvalues have positive real parts, then
$x_{0}$ is called a source. A source repels nearby trajectories. If
the real parts of the eigenvalues have opposite signs, then $x_{0}$
is called a saddle. A saddle attracts some points (those lying in
the so-called stable manifold, which is a one-dimensional manifold
for the planar systems); repels other points (those lying in the
so-called unstable manifold, which is also a one-dimensional
manifold for the planar systems, transversal to the stable
manifold); and all trajectories starting in a neighborhood of a
saddle point but not lying on the stable manifold will eventually
leave this neighborhood.  A periodic orbits (or limit cycle) is a
closed curve $\gamma$ with the property that there is some $T>0$
such that $\phi(f, x)(T) = x$ for any $x\in\gamma$. Hyperbolic
periodic orbits have properties similar to hyperbolic equilibria;
for a planar system there are only attracting or repelling
hyperbolic periodic orbits. See \cite[p. 225]{Per01} for more
details.

In Theorem \ref{Th:ComputabilityBasins} below, we consider the case
where $K=\mathbb{D}$ only for simplicity. The same argument applies
to any closed disk with rational radius.  Before stating and proving
Theorem \ref{Th:ComputabilityBasins}, we presents two lemmas first.
The proofs of the lemmas can be found in \cite{GZ21}. Let
$\mathcal{SS}_2 \subset \mathcal{V}(\mathbb{D})$ be the set of all
$C^1$ structurally stable planar vector field defined on
$\mathbb{D}$.

\begin{lemma} \label{Lem:NumberSinks} The map $\Psi_{N}: \mathcal{SS}_2
\to \mathbb{N}$, $f\mapsto \Psi_{N}(f)$, is computable, where
$\Psi_{N}(f)$ is the number of the sinks of $f$ in $\mathbb{D}$.
\end{lemma}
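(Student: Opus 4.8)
The plan is to exploit the two structural consequences of structural stability recorded just before the lemma: a field $f \in \mathcal{SS}_2$ has only finitely many equilibria, all of them hyperbolic, hence isolated and with $\det Df \neq 0$ at each. Moreover, since we work with real planar systems, an equilibrium $x_0$ is a sink precisely when both eigenvalues of $Df(x_0)$ have negative real part, which for a $2\times 2$ real matrix is equivalent to the two sign conditions $\operatorname{tr} Df(x_0) < 0$ and $\det Df(x_0) > 0$. Thus counting sinks reduces to (i) locating the finitely many zeros of $f$ and (ii) reading off the signs of the trace and determinant of $Df$ at each.

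For the location step I would use the planar topological degree (the winding number of the Gauss map $x \mapsto f(x)/\|f(x)\|$). First tile $\mathbb{D}$ by a rational grid of boxes of side $2^{-m}$. Given the $C^1$-name of $f$ one can, on each box $Q$ on whose boundary $f$ does not vanish, certify a positive lower bound for $\|f\|$ along $\partial Q$ and then compute the integer $\deg(f,\partial Q)$ by sampling $\arg f$ around $\partial Q$ finely enough (the required sampling density is certified by the lower bound on $\|f\|$ together with a modulus of continuity read from the name). By the Poincar\'e index formula this degree equals the sum of indices of the equilibria inside $Q$; sinks and sources contribute $+1$ and saddles $-1$. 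Because the equilibria are finite and isolated, for all sufficiently large $m$ no equilibrium lies on a grid line and each box contains at most one equilibrium, so every box then carries degree in $\{-1,0,+1\}$.

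For the classification step, on each box with $\deg(f,\partial Q)=+1$ --- which for fine $m$ contains a single sink or source --- I would pick a rational point $p \in Q$ at which $\|f(p)\|$ is small (hence near the equilibrium) and evaluate $\operatorname{tr} Df(p)$ and $\det Df(p)$ from the $C^1$-name; by continuity of $Df$ and hyperbolicity, once the boxes are small these signs agree with their values at the enclosed equilibrium, so the equilibrium is a sink iff $\operatorname{tr} Df(p)<0$ and $\det Df(p)>0$. Summing the boxes passing this test yields $\Psi_{N}(f)$.

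The main obstacle is termination: the algorithm must decide when the resolution $2^{-m}$ is fine enough that the box picture is correct, and it cannot know the separation scale of the equilibria in advance. This is exactly where the promise $f\in\mathcal{SS}_2$ does the work. Hyperbolicity supplies, at the (finitely many, isolated) equilibria, strictly positive lower bounds on $\|f\|$ away from them and on $|\det Df|$ and $|\operatorname{tr} Df|$ at sinks and sources, so there is a threshold $m^{\ast}$ beyond which each equilibrium sits alone in a box, $f$ is nonzero on every $\partial Q$, and the trace/determinant tests return strict, correctly-signed values. One builds a certificate --- checkable at each $m$ --- asserting that $f$ is nonvanishing on all box boundaries, that every box of nonzero degree has degree $\pm 1$ and is non-adjacent to any other such box, and that it admits a strictly-signed trace/determinant test; this certificate is guaranteed to hold for all $m\geq m^{\ast}$, and whenever it holds the resulting count is provably exact. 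The algorithm refines $m$ until the certificate is met and outputs the corresponding count; local constancy of $\Psi_{N}$ on $\mathcal{SS}_2$ (topological equivalence preserves the number of sinks) confirms that this output is the genuine value.
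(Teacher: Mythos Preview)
The paper itself does not prove this lemma; it simply cites \cite{GZ21}. So there is no detailed proof here to compare against, but the strategy you outline --- tile $\mathbb{D}$ by small rational boxes, use the winding number of $f$ around each boundary to locate equilibria, and classify each by the signs of $\operatorname{tr}Df$ and $\det Df$ --- is the natural one and matches in spirit what that reference does.

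There is, however, a genuine gap in your termination argument. You assert that ``whenever [the certificate] holds the resulting count is provably exact,'' but the certificate you describe does not guarantee this. A box $Q$ with $\deg(f,\partial Q)=+1$ can contain two sinks and a saddle (indices $+1,+1,-1$); your single-point trace/determinant test at some $p\in Q$ would then report one sink and miss the other. Worse, a box with degree~$0$ can contain a sink and a saddle; your certificate ignores zero-degree boxes entirely, so that sink is never counted. The conditions you list (nonvanishing on boundaries, degree $\pm 1$, non-adjacency, a sign test at one point) simply do not certify that each box holds at most one equilibrium, and without that the output can be wrong in either direction. The fix is to strengthen the certificate: require for every box either a verified positive lower bound on $\|f\|$ throughout (hence no equilibria), or a verified definite sign of $\det Df$ throughout (so all local indices in the box agree, whence $|\deg(f,\partial Q)|$ \emph{equals} the number of equilibria in $Q$); then insisting the degree be $0$ or $\pm 1$ really does force at most one equilibrium per box. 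Likewise, the trace test should be verified on the whole box, not at a single sample. Hyperbolicity ensures these stronger conditions are met for fine enough subdivisions, so termination survives.

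A smaller slip: the certificate need not hold ``for all $m\ge m^{\ast}$'' --- an equilibrium with dyadic-rational coordinates sits on a grid line for every sufficiently large $m$. What is true (and sufficient) is that it holds for \emph{some} $m$; dovetailing over rational grid offsets as well as mesh sizes handles this cleanly.
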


\begin{lemma} The map $\Psi_{S}: \mathcal{SS}_2 \times \mathbb{N} \to \mathbb{R}^2\cup \{ \emptyset\}$ is computable, where
\[ \left\{ \begin{array}{lll}
(f, i) \mapsto \emptyset & & \mbox{if $i=0$ or $i\geq \Psi_{N}(f)$}
\\
(f, i) \mapsto \mbox{ith sink of $f$} & & \mbox{if $1\leq i\leq
\Psi_{N}(f)$.}
\end{array} \right. \]
\end{lemma}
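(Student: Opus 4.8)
The plan is to reduce everything to Lemma \ref{Lem:NumberSinks}. Since $N:=\Psi_{N}(f)$ is already computable and is an exact natural number, the range test is decidable: on input $(f,i)$ I compare $i$ with $N$ and output the symbol $\emptyset$ whenever $i\notin\{1,\dots,N\}$. All the content is therefore to produce, when $1\le i\le N$, a name of the $i$th sink, that is, a sequence of rational points converging to it at the prescribed rate. I would lean on the two structural facts guaranteed for every $f\in\mathcal{SS}_2$: the equilibria are finitely many and all hyperbolic, so each is an isolated, nondegenerate zero of $f$ (thus $\det Df\neq 0$ there), and in the plane a hyperbolic equilibrium is a sink exactly when $\det Df>0$ and $\operatorname{tr} Df<0$.

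First I would locate all equilibria by a refine-and-certify search over a grid of rational boxes tiling $\mathbb{D}$. On each box $B$ I compute a lower bound for $\|f\|$ and discard $B$ if it is positive (a zero-free certificate). A box is given the opposite, single-equilibrium treatment once $Df$ is verified on $B$ to be nonsingular with $\det$ of a fixed sign and to vary little across $B$; this forces $f$ to be injective on $B$, and then the topological degree of $f|_{\partial B}$ --- a computable integer once $\|f\|$ is bounded below on $\partial B$ --- equals $\pm1$ if $B$ contains a unique equilibrium and $0$ if it does not. I keep subdividing until every box carries one of these two certificates.

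The main obstacle is to see that this search halts and that it finds every equilibrium exactly once. Here structural stability does the work: finiteness of the equilibrium set gives a positive minimal separation, and hyperbolicity together with continuity of $Df$ gives, around each equilibrium, a scale below which $\det Df$ keeps a fixed nonzero sign and $Df$ oscillates little --- the injectivity regime --- while away from the equilibria $f$ is bounded below on compact pieces. Hence some fine enough subdivision reaches the terminal configuration in which every box is certified, and since that configuration is recognizable the procedure stops. The delicate point is exactly this passage from the qualitative robustness (finiteness plus hyperbolicity) to a \emph{verifiable} stopping certificate, as we are given no quantitative bound on either the separation of equilibria or the eigenvalue gaps and must discover an adequate scale adaptively.

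It remains to classify and to order. At each isolated equilibrium $x_{0}$, located to arbitrary precision by further subdivision of its box, the matrix $Df(x_{0})$ is computable from the $C^1$-name of $f$, so $\det Df(x_{0})$ and $\operatorname{tr} Df(x_{0})$ are computable reals. Hyperbolicity promises $\det Df(x_{0})\neq0$, which makes its sign decidable (compute until the approximation is provably away from $0$); if $\det<0$ the point is a saddle and I discard it, and if $\det>0$ then $\operatorname{tr} Df(x_{0})\neq0$ as well, so its sign too is decidable and $x_{0}$ is a sink precisely when $\operatorname{tr}<0$. This extracts exactly $N$ sinks, each in its own rational isolating box. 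To make ``the $i$th sink'' well defined and computable I would order these pairwise disjoint boxes by a fixed decidable rule on boxes --- rather than by comparing the sink coordinates, which would require deciding equalities of reals --- select the $i$th box, and refine inside it to stream a name of the unique sink it contains to the requested accuracy.
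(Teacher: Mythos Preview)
The paper does not prove this lemma in-text; it simply defers to \cite{GZ21}. So there is no in-paper argument to compare against, and your sketch has to be judged on its own.

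Your approach is the standard one and is essentially correct: refine a rational tiling of $\mathbb{D}$, discard boxes on which $\|f\|$ is certifiably positive, and on the remaining small boxes use nonsingularity of $Df$ together with the topological degree of $f|_{\partial B}$ to certify a unique zero; then classify each hyperbolic equilibrium by the (decidable, since nonzero) signs of $\det Df$ and $\operatorname{tr} Df$. The termination argument---finitely many hyperbolic zeros, hence a positive minimal separation, a uniform injectivity scale near each, and a uniform positive lower bound on $\|f\|$ on the complement---is right. One free fact worth invoking: since $f\in\mathcal{V}(\mathbb{D})$ points inward along $\partial\mathbb{D}$, no equilibria lie on the boundary and edge cells cause no trouble.

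The one soft spot is the ordering step. Sorting by isolating boxes is decidable, but the particular collection of terminal boxes your refine-and-certify loop produces depends on the \emph{name} of $f$: a slower-converging name may delay certificates and push the procedure to a finer subdivision before halting, yielding a different box pattern and hence a different order. Your realized map $(f,i)\mapsto s_i$ is then only a realizer of a multi-valued listing, not a single-valued function on the represented space. This is as much an ambiguity in the lemma statement as a flaw in your argument---the paper never specifies what ``the $i$th sink'' means---and for its sole application (Theorem~\ref{Th:ComputabilityBasins}, which simply ranges over all sinks) a listing in some order is all that is needed.
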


\begin{theorem} \label{Th:ComputabilityBasins} The map $\Psi:
\mathcal{SS}_2 \times \mathbb{N} \to \mathcal{O}$ is computable,
where
\[ \left\{ \begin{array}{lll}
   (f, i) \mapsto \emptyset & & \mbox{if $i=0$ or $i\geq
   \Psi_{N}(f)$} \\
   (f, i) \mapsto W_{s} & & \mbox{if $1\leq i\leq \Psi_{N}(f)$ and
   $s=\Psi_{S}(f, i)$.} \end{array} \right. \]
\end{theorem}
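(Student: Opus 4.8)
The plan is to produce, from names of $f\in\mathcal{SS}_2$ and of $i\in\mathbb{N}$, both an inner-name and an outer-name of $W_{s}$ in the sense of the open-set representation of Section~\ref{Subsec:comptanalysis}; the case $i=0$ or $i\ge\Psi_{N}(f)$ is decided by Lemma~\ref{Lem:NumberSinks} and returns $\emptyset$, so assume $1\le i\le\Psi_{N}(f)$ and let $s=\Psi_{S}(f,i)$, which is computable by the lemma computing $\Psi_{S}$ above. First I would exploit that $s$ is a hyperbolic sink: since the eigenvalues of $Df(s)$ have negative real parts and $Df(s)$ is computable, a computable solution of the Lyapunov equation $Df(s)^{\top}P+P\,Df(s)=-I$ yields a computable positive-definite $P$, and from $P$ one computes a rational $c>0$ so that the sublevel set $B_{s}=\{x:(x-s)^{\top}P(x-s)<c\}$ is a forward-invariant trapping region on which the flow converges to $s$; in particular $\overline{B_{s}}\subset W_{s}$. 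I would also record that, because $f$ is a computable $C^{1}$ field and $\mathbb{D}$ is compact and forward-invariant (the field points inward on $\partial\mathbb{D}$), the forward flow $\phi\colon[0,\infty)\times\mathbb{D}\to\mathbb{D}$ is computable, together with an effective Gronwall bound $L=L(\|f\|_{1})$ giving, for each rational disk $D(a,r)$ with $\overline{D(a,r)}\subset\operatorname{int}\mathbb{D}$ and each rational $T$, a computable enclosure $\phi_{T}(\overline{D(a,r)})\subseteq D\big(\phi_{T}(a),\,r\,e^{LT}+2^{-\ell}\big)$ with $\ell$ under our control.

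For the inner-name I would dovetail over all rational disks $D(a,r)$ with $\overline{D(a,r)}\subset\operatorname{int}\mathbb{D}$ and all rational times $T>0$, accepting the pair $(D(a,r),T)$ whenever the computed enclosure of $\phi_{T}(\overline{D(a,r)})$ lies inside $B_{s}$, and enumerating the accepted disks. Every accepted disk satisfies $\overline{D(a,r)}\subset W_{s}$ because $B_{s}$ is forward-invariant and converges to $s$; conversely, if $x\in W_{s}$ then $\phi_{t}(x)\in B_{s}$ for some $t$, so by continuity of $\phi_{t}$ and openness of $B_{s}$ some small rational disk around $x$ is mapped into $B_{s}$ at a rational time, and the enclosure becomes sharp enough to be accepted once the disk is small. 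Hence the enumerated disks exhaust $W_{s}$, giving a correct inner-name.

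The outer-name requires a sequence dense in $\mathbb{R}^{2}\setminus W_{s}$. I would split this complement as $(\mathbb{R}^{2}\setminus\mathbb{D})\cup(\mathbb{D}\setminus W_{s})$: a dense computable sequence in the exterior $\mathbb{R}^{2}\setminus\mathbb{D}$ (which also accumulates on $\partial\mathbb{D}$) is immediate since $\mathbb{D}$ is a rational disk. For $\mathbb{D}\setminus W_{s}$ I would use Peixoto's structure of structurally stable planar flows: there are finitely many hyperbolic attractors (the sinks, obtained from the lemmas above, and the attracting periodic orbits), and every trajectory except those on the nowhere-dense separatrix skeleton converges to one of them. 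I would compute a finite family of pairwise-disjoint computable trapping regions $T_{1},\dots,T_{m}$, one around each attractor, with each $T_{j}$ associated to an attractor other than $s$ disjoint from $\overline{B_{s}}$; then I would enumerate the rational $q\in\operatorname{int}\mathbb{D}$ whose forward trajectory enters some such $T_{j}$, outputting those $q$. Since each boundary separatrix of $W_{s}$ is adjacent to the basin of another attractor, $\mathbb{D}\setminus W_{s}$ equals the closure of the union of the other basins, so these $q$ are dense in $\mathbb{D}\setminus W_{s}$; combined with the exterior sequence this yields a correct outer-name, and $W_{s}$ is therefore computable.

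The hard part will be the outer-name, specifically the computation of trapping regions around the attracting periodic orbits: unlike the sinks, which are handed to us by the lemmas and localized by a Lyapunov function, locating limit cycles computably admits no naive finite procedure, and I expect to lean on hyperbolicity and finiteness from structural stability, together with the flow-based localization techniques of \cite{GZ21}, to enclose each attracting orbit in a computable trapping annulus disjoint from the other attractors. The secondary point to nail down rigorously is the structural claim that $\partial W_{s}$ is approached only from neighboring basins, so that the enumerated ``other-basin'' rationals are genuinely dense in $\mathbb{D}\setminus W_{s}$; this again follows from the Morse--Smale description of structurally stable planar systems.
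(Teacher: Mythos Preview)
Your inner-name construction is sound and is essentially how the paper proceeds (the paper simply cites earlier work to the effect that $W_s$ is r.e.\ open). The skeleton of your outer-name---enumerate rational points whose forward orbit is certified to enter a trapping region of some attractor other than $s$---is also the heart of the paper's argument. However, the structural claim you flag at the end as a ``secondary point to nail down'' is actually false, and this breaks the outer-name.

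Your assertion that $\mathbb{D}\setminus W_s$ equals the closure of the union of the other basins fails whenever the complement of $W_s$ contains a repeller or a saddle stable manifold that is not bordered by any basin other than $W_s$. The cleanest counterexample: take $f\in\mathcal{SS}_2$ whose only attractor is the sink $s$, together with one source $p$ and one saddle $q$ (index sum $1+1-1=1$ on the disk; with hyperbolic equilibria and no saddle connections this is Morse--Smale, hence structurally stable). Here $\mathbb{D}\setminus W_s$ consists of $p$, $q$, and the stable manifold of $q$---a genuine one-dimensional set meeting the interior of $\mathbb{D}$---yet there are \emph{no} other attractors, so the closure of the union of the other basins is empty. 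Your interior enumeration outputs nothing, and your outer sequence accumulates only on $\partial\mathbb{D}$, missing the interior piece of the complement entirely. The Morse--Smale description does not rescue this; it is precisely the Morse--Smale picture that produces such examples.

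The paper repairs this by treating the repelling set and the saddles as separate ingredients of the outer-name. In the saddle-free case it writes $\mathbb{D}\setminus W_s = W_A\cup B$, where $W_A$ is the union of the other basins (handled by a classification algorithm essentially identical to yours) and $B$ is the closed set of sources and repelling periodic orbits; the paper invokes \cite{GZ21} to get that $B$ is a computable closed set, hence already carries a computable dense sequence, which is interleaved with the sequence produced on $W_A$. When saddles are present, the paper additionally computes each saddle's stable manifold as a curve (via the local-stable-manifold computability of \cite{GZB12}), follows it backward in time until it enters the computed neighborhoods of the repellers, and uses a Hausdorff-distance argument over these finitely many curves to finish. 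Your proposal needs both of these ingredients---an explicit dense sequence on the repelling set $B$, and an explicit handling of the saddle stable manifolds---before the outer-name is complete.
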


\begin{proof} Let us fix an $f\in \mathcal{SS}_2$. Assume that
$\Psi_{N}(f)\neq 0$ and $s$ is a sink of $f$. In \cite{Zho09} and \cite{GZ21},
it is shown that:
\begin{itemize}
\item[(1)] $W_{s}$ is a r.e.~open subset of $\mathbb{D}\subseteq\mathbb{R}^2$;
\item[(2)] there is an algorithm that on input $f$ and $k\in \mathbb{N}$, $k>0$, computes a finite
sequence of mutually disjoint closed squares or closed ring-shaped
strips (annulus) such that:
\begin{itemize}
\item[(a)] each square contains
exactly one equilibrium point with a marker indicating if it
contains a sink or a source;
\item[(b)] each annulus contains exactly one
periodic orbit with a marker indicating if it contains an attracting
or a repelling periodic orbit;
\item[(c)] each square (resp. annulus) containing a sink (resp. an attracting periodic orbit) is
time invariant for $t>0$;
\item[(d)] the union of this finite sequence contains all equilibrium
points and periodic orbits of $f$, and the Hausdorff distance
between this union and the set of all equilibrium points and
periodic orbits is less than $1/k$;
\item[(e)] for each annulus, $1\leq i\leq p(f)$, the minimal distance
between the inner boundary (denoted as $IB_i$) and the outer
boundary (denoted as $OB_i$), $m_i = \min\{ d(x, y) : \, x\in IB_i,
y\in OB_i\}$, is computable from $f$ and $m_i>0$.
\end{itemize}
\end{itemize}

We begin with the case that $f$ has no saddle point. Since $W_s$ is
r.e.~open, there exists computable sequences $\{ a_n\}$ and $\{
r_n\}$, $a_n\in \mathbb{Q}^2$ and $r_n\in \mathbb{Q}$, such that
$W_s=\cup_{n=1}^{\infty}B(a_n, r_n)$. Let $A$ be the union of all
squares and annuli in the finite sequence containing a sink or an
attracting periodic orbit except the square containing $s$, and let
$B$ be the union of all sources and repelling periodic orbits. Then
$B$ is a computable closed subset of $\mathbb{D}$ \cite{GZ21}.
Hence, $\mathbb{D}\setminus B$ is a computable open subset of
$\mathbb{D}$. Since $f$ has no saddle, $W_s\subset
\mathbb{D}\setminus B$. List the squares in $A$ as $S_1, \ldots,
S_{e(f)}$ and annuli as $C_1, \ldots, C_{p(f)}$. Denote the center
and the side-length of $S_j$ as $CS_j$ and $l_j$, respectively, for
each $1\leq j\leq e(f)$.

We first present an algorithm -- the classification algorithm --
that determines whether $x\in W_s$ or $x$ is in the union of basins
of attraction of the sinks and attracting periodic orbits contained
in $A$. The algorithm works as follows: for each $x\in
\mathbb{D}\setminus B$, simultaneously compute

\[ \left\{ \begin{array}{l}
   d(x, a_n), n=1, 2, \ldots \\ \\
   d(\phi_{t}(x), CS_j), 1\leq j\leq e(f), t=1, 2, \ldots \\ \\
   \mbox{$d(\phi_{t}(x), IB_i)$ and $d(\phi_{t}(x), OB_i)$}, 1\leq
   i \leq p(f), t=1, 2, \ldots \end{array} \right. \]
where $\phi_t(x)=\phi(f, x)(t)$ is the solution of the system $dz/dt
= f(z)$ with the initial condition $z(0)=x$ at time $t$. (Recall
that the solution (as a function of time $t$) of the initial-value
problem is uniformly computable from $f$ and $x$ \cite{GZB07}.) Halt
the computation whenever one of the following occurs: (i) $d(x,
a_n)<r_n$; (ii) $d(\phi_{t}(x), CS_j) < l_j/2$ for some $t=l\in
\mathbb{N}$ ($l>0$); or (iii) $d(\phi_{t}(x), IB_i) < m_i/2$ and
$d(\phi_{t}(x), OB_i) < m_i$ for $t=l\in \mathbb{N}$ ($l>0$). If the
computation halts, then either $x\in W_{s}$ provided that $d(x,
a_n)<r_n$ or else $\phi_{t}(x)\in S_j$ or $\phi_{t}(x)\in C_i$ for
some $t=l>0$. Since $S_j$ and $C_i$ are time invariant for $t>0$,
each $S_j$ contains exactly one sink for $1\leq j\leq e(f)$, and
each $C_i$ contains exactly one attracting periodic orbit for $1\leq
i\leq p(f)$, it follows that either $x$ is in the basin of
attraction of the sink contained in $S_j$ if (ii) occurs or $x$ is
in the basin of attraction of the attracting periodic orbit
contained in $C_i$ if (iii) occurs. We note that, for any $x\in
\mathbb{D}\setminus B$, exactly one of the halting status, (i),
(ii), or (iii), can occur following the definition of $W_s$ and the
fact that $S_j$ and $C_i$ are time invariant for $t>0$. Let $W_{A}$
be the set of all $x\in \mathbb{D}\setminus B$ such that the
computation halts with halting status (ii) or (iii) on input $x$.
Then it is clear that $W_{s}\cap W_{A}=\emptyset$.

We turn now to show that the computation will halt. Since there is
no saddle, every point of $\mathbb{D}$ that is not a source or on a
repelling periodic orbit will either be in $W_s$ or converges to a
sink/attracting periodic orbit contained in $A$ as $t\to \infty$
(this is ensured by the structural stability of the system and
Peixoto's characterization theorem; see, for example, \cite[Theorem
3 on page 321]{Per01}).
Thus either $x\in W_s$ or $x$ will eventually enter some $S_j$ (or
$C_i$) and stay there afterwards for some sufficiently large
positive time $t$. Hence the condition (i) or (ii) or (iii) will be
met for some $t>0$.

To prove that $W_s$ is computable it is suffices to show that the
closed subset $\mathbb{D}\setminus W_{s} = W_{A}\cup B$ is
r.e.~closed; or, equivalently, $W_{A}\cup B$ contains a computable
sequence that is dense in $W_{A}\cup B$. To see this, we first note
that $\mathbb{D}\setminus B$ has a computable sequence as a dense
subset. Indeed, since $\mathbb{D}\setminus B$ is computable open,
there exist computable sequences $\{ z_i\}$ and $\{\theta_i\}$,
$z_i\in \mathbb{Q}^2$ and $\theta_i\in \mathbb{Q}$, such that
$\mathbb{D}\setminus B = \cup_{i=1}^{\infty}B(z_i, \theta_i)$. Let
$\mathcal{G}_{l}=\{ (m/2^{l}, n/2^{l}): \, \mbox{$m, n$ are integers
and $-2^{l}\leq m, n\leq 2^{l}$}\}$ be the $\frac{1}{2^l}$-grid on
$\mathbb{D}$, $l\in \mathbb{N}$. The following procedure produces a
computable dense sequence of $\mathbb{D}\setminus B$: For each input
$l\in \mathbb{N}$, compute $d(x, z_i)$, where $x\in \mathcal{G}_l$
and $1\leq i\leq l$ and output those $\frac{1}{2^l}$-grid points $x$
if $d(x, z_i)<\theta_i$ for some $1\leq i\leq l$. By a standard
paring, the outputs of the computation form a computable dense
sequence, $\{ q_i\}$,  of $\mathbb{D}\setminus B$. Then the
algorithm will enlist those points in the sequence which fall inside
$W_{A}$, say $\tilde{q}_1, \tilde{q}_2, \ldots$. Clearly, $\{
\tilde{q}_j\}$ is a computable sequence. If we can show that $\{
\tilde{q}_j\}$ is also dense in $W_{A}$, then $W_{A}\cup B$ contains
a computable dense sequence. The conclusion comes from the fact that
$B$ is a computable closed subset; hence $B$ contains a computable
dense sequence.

It remains to show that $\{ \tilde{q}_j\}$ is dense in $W_{A}$. It
suffices to show that, for any $x\in W_{A}$ and any neighborhood
$B(x, \epsilon)\cap W_{A}$ of $x$ in $W_{A}$, there exists some
$\tilde{q}_{j_0}$ such that $\tilde{q}_{j_0}\in B(x, \epsilon)\cap
W_{A}$, where $\epsilon>0$ and the disk $B(x, \epsilon)\subset
\mathbb{D}\setminus B$. We begin by recalling a well-known fact that
the solution $\phi_{t}(x)$ of the initial value problem $dx/dt =
f(x)$, $\phi_{0}(x)=x$, is continuous in time $t$ and in initial
condition $x$. In particular, the following estimate holds true for
any time $t>0$ (see e.g.~\cite{BR89}):
\begin{equation} \label{error-initial-condition}
\|\phi_{t}(x) - \phi_{t}(y)\| \leq \| x - y\|e^{Lt}
\end{equation}
where $x=\phi_{0}(x)$ and $y=\phi_{0}(y)$ are initial conditions,
and $L$ is a Lipschitz constant satisfied by $f$. (Since $f$ is
$C^1$ on $\mathbb{D}$, it satisfies a Lipschitz condition and a
Lipschitz constant can be computed from $f$ and $Df$.) Since $x\in
W_{A}$, the halting status on $x$ is either (ii) or (iii). Without
loss of generality we assume that the halting status of $x$ is (ii).
A similar argument works for the case where the halting status of
$x$ is (iii).  It follows from the assumption that $d(\phi_{t}(x),
S_j)<l_j/2$ for some $1\leq j\leq e(f)$ and some $t=l>0$. Compute a
rational number $\alpha$ satisfying $0<\alpha < (l_j/2) -
d(\phi_{t}(x), S_j)$ and compute another rational number $\beta$
such that $0< \beta < \epsilon$ and $\| y_1 - y_2\|e^{l\cdot L}<
\alpha$ whenever $\| y_1 - y_2\|<\beta$. Then for any $y\in B(x,
\beta)$,
\begin{eqnarray*}
& & d(\phi_{t}(y), S_j) \\
& \leq & d(\phi_{t}(y), \phi_{t}(x)) + d(\phi_{t}(x), S_j)
\\
& \leq & \alpha + d(\phi_{t}(x), S_j) < (l_j/2) - d(\phi_{t}(x),
S_j) + d(\phi_{t}(x), S_j) = l_j/2
\end{eqnarray*}
which implies that $B(x, \beta)\subset W_{A}$. Since $B(x,
\beta)\subset B(x, \epsilon) \subset \mathbb{D}\setminus B$ and $\{
q_i\}$ is dense in $\mathbb{D}\setminus B$, there exists some
$q_{i_0}$ such that $q_{i_0}\in B(x, \beta)$. Since $B(0,
\beta)\subset W_{A}$, it follows that $q_{i_0}=\tilde{q}_{j_0}$ for
some $j_0$. This shows that $\tilde{q}_{j_0}\in B(x, \epsilon)\cap
W_A$.

We turn now to the general case where saddle point(s) is present. We
continue using the notations introduced for the special case where
the system has no saddle point. Assume that the system has the
saddle points $d_{m}$, $1\leq m\leq d(f)$ and $D_m$ is a closed
square containing $d_m$, $1\leq m\leq d(f)$. For any given $k\in
\mathbb{N}$ ($k>0$), the algorithm constructed in \cite{GZ21} will
output $S_j$, $C_i$, and $D_m$ such that each contains exactly one
equilibrium point or exactly one periodic orbit, the (rational)
closed squares and (rational) closed annuli are mutually disjoint,
each square has side-length less than $1/k$, and the Hausdorff
distance between $C_i$ and the periodic orbit contained inside $C_i$
is less than $1/k$, where $1\leq j\leq e(f)$, $1\leq m\leq d(f)$,
and $1\leq i\leq p(f)$. For each saddle point $d_m$, it is proved in
\cite{GZB12} that the stable manifold of $d_m$ is locally computable
from $f$ and $d_m$; that is, there is a Turing algorithm that
computes a bounded curve -- the flow is planar and so the stable
manifold is one dimensional -- passing through $d_m$ such that
$\lim_{t\to \infty}\phi_{t}(x_0)=d_m$ for every $x_0$ on the curve.
In particular, the algorithm produces a computable dense sequence on
the curve. Pick two points, $z_1$ and $z_2$, on the curve such that
$d_m$ lies on the segment of the curve from $z_1$ to $z_2$. Since
the system is structurally stable, there is no saddle connection;
i.e.~the stable manifold of a saddle point cannot intersect the
unstable manifold of the same saddle point or of another saddle
point. Thus, $\phi_{t}(z_1)$ and $\phi_{t}(z_2)$ will enter $C_B$
for all $t\leq -T$ for some $T>0$, where $C_{B}=(\cup \{ S_j : \,
s_j\in B\}) \cup (\cup \{ C_i: \, p_i\subset B\})$. We denote the
curve $\{ \phi_{t}(z_1): \, -T \leq t\leq 0\} \cup \{ z: \mbox{$z$
is on the stable manifold of $d_m$ between $z_1$ and $z_2$}\} \cup
\{ \phi_{t}(z_2): \, -T \leq t\leq 0\}$ as $\Gamma_{d_m}$. Let
$\widetilde{C}=C_B\cup \{ \Gamma_{d_m}: \, 1\leq m\leq d(f)\}$. Then
$\widetilde{C}$ is a computable compact subset in $\mathbb{D}$.
Moreover, every point in $\mathbb{D}\setminus \widetilde{C}$
converges to either a sink or an attracting periodic orbit because
there is no saddle connection. Using the classification algorithm
and a similar argument as above we can show that $W_{A}\cap
(\mathbb{D}\setminus \widetilde{C})$ is a computable open subset in
$\mathbb{D}\setminus \widetilde{C}$ and thus computable open in
$\mathbb{D}$. Since $W_{A}\subset \mathbb{D}\setminus B$ and
$W_{A}\cap \Gamma_{d_m}=\emptyset$, it follows that
\begin{eqnarray*}
& & d_{H}\left(\mathbb{D}\setminus (W_{A}\cap (\mathbb{D}\setminus
\widetilde{C})), \, \mathbb{D}\setminus (W_{A}\cap
(\mathbb{D}\setminus B))\right) \\
& = & d_{H}\left((\mathbb{D}\setminus W_A)\cup C_{B}, \,
(\mathbb{D}\setminus W_A)\cup B\right) \\
& \leq & d_{H}(C_{B}, B) < \frac{1}{k}.
\end{eqnarray*}
We have proved that there is an algorithm that, for each input $k\in
\mathbb{N}$ ($k>0$), computes an open subset $U_{k}=W_{A}\cap
(\mathbb{D}\setminus \widetilde{C})$ of $\mathbb{D}$ such that
$U_{k}\subset W_{A}$ and $d_{H}(\mathbb{D}\setminus U_{k}, \,
\mathbb{D}\setminus W_{A})<\frac{1}{k}$. This shows that $W_{A}$ is
a computable open subset of $\mathbb{D}$. (Recall an equivalent
definition for a computable open subset of $\mathbb{D}$: an open
subset $U$ of $\mathbb{D}$ is computable if there exists a sequence
of computable open subsets $U_k$ of $\mathbb{D}$ such that $U=\cup
U_{k}$ and $d_{H}(\mathbb{D}\setminus U_k, \, \mathbb{D}\setminus
U)\leq \frac{1}{k}$ for every $k\in \mathbb{N}\setminus \{ 0\}$.)
\end{proof}

\begin{corollary} For every $f\in \mathcal{SS}_2$ there is a
neighborhood of $f$ in $C^1(\mathbb{D})$ such that the function
$\Psi$ is (uniformly) computable in this neighborhood.
\end{corollary}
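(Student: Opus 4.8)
The plan is to derive the corollary directly from Theorem~\ref{Th:ComputabilityBasins} together with the fact that the structurally stable planar fields form an \emph{open} subset of $C^1(\mathbb{D})$. The essential observation is that computability of a map between represented spaces, as asserted in Theorem~\ref{Th:ComputabilityBasins}, is witnessed by a single Turing functional and is therefore automatically uniform; the only additional ingredient needed is an honest $C^1$-ball around $f$ that stays inside the domain $\mathcal{SS}_2$ on which the theorem applies.

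First I would invoke Peixoto's density theorem, which (as recalled earlier) asserts that $\mathcal{SS}_2$ is not merely dense but \emph{open} in $\mathcal{V}(\mathbb{D})$; equivalently, the openness is already built into the definition of structural stability, since each $f\in\mathcal{SS}_2$ comes with an $\varepsilon>0$ such that every $g$ with $\|f-g\|_1\le\varepsilon$ is topologically equivalent to $f$. Thus there is a rational $\varepsilon>0$ with $U:=\{g\in C^1(\mathbb{D}):\|f-g\|_1<\varepsilon\}\subseteq\mathcal{SS}_2$. In particular $U\times\mathbb{N}$ is contained in the domain $\mathcal{SS}_2\times\mathbb{N}$ of $\Psi$.

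Next I would transfer the computability. The represented space $\mathcal{SS}_2$ carries the subspace representation inherited from $C^1(\mathbb{D})$, so any $C^1$-name of a function $g\in U$ is simultaneously a valid $\mathcal{SS}_2$-name. Consequently the single algorithm produced in the proof of Theorem~\ref{Th:ComputabilityBasins}---which on input (a name of) $g$ and an index $i$ outputs a name of $\Psi(g,i)$---may be run verbatim on every $g\in U$. This is precisely the assertion that $\Psi$ is \emph{uniformly} computable on $U$: one Turing functional handles all $g\in U$ and all $i\in\mathbb{N}$ simultaneously, the uniformity being over both arguments.

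The only point requiring care---and the step I would treat as the main obstacle---is justifying $U\subseteq\mathcal{SS}_2$ rather than merely $f\in\mathcal{SS}_2$. Topological equivalence of $g$ to $f$ does not by itself hand us hyperbolicity of the equilibria and periodic orbits of $g$ or the absence of saddle connections for $g$, which are the structural properties that the classification algorithm of Theorem~\ref{Th:ComputabilityBasins} actually relies on. I would close this gap by appealing to Peixoto's characterization theorem (finitely many equilibria and periodic orbits, all hyperbolic, and no saddle connections, if and only if structural stability) in its open-set form, so that the neighborhood $U$ genuinely consists of structurally stable fields. Once that is secured, the restriction of the computable map $\Psi$ to $U\times\mathbb{N}$ inherits computability, and uniformity over $U$ follows at once.
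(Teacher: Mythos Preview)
Your proposal is correct and follows essentially the same approach as the paper: the paper's proof is the single line ``The corollary follows from Peixoto's density theorem and Theorem~\ref{Th:ComputabilityBasins},'' and you have unpacked exactly this --- openness of $\mathcal{SS}_2$ gives the neighborhood $U$, and the computable map $\Psi$ from Theorem~\ref{Th:ComputabilityBasins} restricts to $U$. One remark: the step you flag as the ``main obstacle'' (that $U\subseteq\mathcal{SS}_2$) is lighter than you suggest, since openness of $\mathcal{SS}_2$ is already part of Peixoto's density theorem as stated in the paper, and in any case follows directly from the definition of structural stability by a triangle-inequality argument (if every $h$ with $\|f-h\|_1\le\varepsilon$ is equivalent to $f$, then for $\|f-g\|_1<\varepsilon/2$ every $h$ with $\|g-h\|_1<\varepsilon/2$ is equivalent to $f$ and hence to $g$); you do not need to pass through Peixoto's characterization of structural stability.
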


\begin{proof} The corollary follows from Peixoto's density theorem
and Theorem \ref{Th:ComputabilityBasins}.
\end{proof}

\begin{remark} In Theorem \ref{Th:ComputabilityBasins}, the closed unit disk
$\mathbb{D}$ can be replaced by closed disks of radius $k\in
\mathbb{N}$.
\end{remark}

Since $\mathcal{SS}_2$ is dense in $C^1(\mathbb{D})$, it follows
from Theorem \ref{Th:ComputabilityBasins} that the map $\Psi: (f,
s)\mapsto W_s$ cannot be robustly non-computable for planar $C^1$
flows in $\mathbb{D}$. On the other hand, it can be semi-robustly
non-computable, as shown in the following theorem.  Combined with
Theorem \ref{Th:ComputabilityBasins} the example below also
indicates that $\Psi$ is $\mathcal{SS}_2$ - removably
non-computable.

\begin{theorem} \label{Th:RobustNoncomputable} There exists a
computable $C^{\infty}$ function $h: \mathbb{R}^2 \to \mathbb{R}^2$,
$h\in \mathcal{V}(K)$, such that $h$ has a unique computable
equilibrium point $s$ - a sink - and the basin of attraction $W_s$
of $s$ is non-computable, where $K$ is the disk centered at the
origin with radius 3.  Moreover, for any $C$-neighborhood $U$ of $h$
in $\mathcal{V}(K)$, there exists a computable $C^{\infty}$ function
$\hat{h}$ such that $\hat{h}\in U$, $\hat{h}$ has a unique
computable equilibrium point $\hat{s}$ - a sink  - and the basin of
attraction $W_{\hat{s}}$ of $\hat{s}$ is non-computable.
\end{theorem}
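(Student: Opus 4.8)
The plan is to reduce the theorem to the construction of a single computable $C^{\infty}$ vector field and then to obtain the semi-robust clause by a harmless supported perturbation. Concretely, I would first build $h\in\mathcal{V}(K)$, with $K$ the disk of radius $3$, whose only equilibrium is a sink $s$ at the origin and whose basin $W_s$ is non-computable. The key design constraint is dynamical: since $h$ points inward on $\partial K$ and has a unique equilibrium, Poincar\'e--Bendixson theory forces every bounded trajectory to accumulate on $s$ or on a periodic orbit, so $W_s$ cannot fill $\mathrm{int}(K)$; the non-computability must therefore be carried by the separatrix that bounds $W_s$. I would accordingly build $h$ in polar form, as a radial field $\dot r=g(r)$, $\dot\theta=1$ perturbed by a small term, arranged so that (i) the origin is a sink, (ii) there is an outer attracting limit cycle near radius $2$ that absorbs the inward boundary flow and keeps $h\in\mathcal{V}(K)$, and (iii) between them lies the repelling separatrix $\partial W_s$. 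The smoothness and computability of $h$ would be guaranteed by assembling it from the canonical bump function $\phi$ used in the proof of Theorem~\ref{th:DifferentiationEverywhereNoncomputable} together with the one-to-one recursive enumeration $a$ of a non-recursive r.e.\ set $A$, so that on each ball $\overline{B}(0,2^l)$ it is uniformly approximable by rational polynomials.

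The non-computability of $W_s$ I would extract from $A$ exactly as in Theorem~\ref{noncomputability-robustness}: I would place a computable sequence of test points $x_n$, at radii accumulating at the separatrix and indexed through $a$, so that $x_n\in W_s$ if and only if $n\in A$, with each $x_n$ at positive distance from $\partial W_s$. Because $W_s$ is the basin of a sink it is r.e.\ open---entry of a trajectory into a small ball around $s$ is detectable---so the ``$n\in A$'' instances are semi-decidable, consistent with $A$ being r.e. The decisive point is the opposite direction: if $W_s$ were computable, then $\R^2\setminus W_s$ would be r.e.\ closed, and together with $x_n\in W_s\iff n\in A$ this would make $A$ recursive, a contradiction. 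This is where the construction must genuinely encode a non-recursive enumeration rather than a merely non-computable boundary radius: a separatrix realized by a single sign change of $g$ would be locatable by bisection and hence computable, so I would instead force $\R^2\setminus W_s$ to be the accumulation of a computably-enumerable family of ``non-halting'' regions whose co-enumeration is precisely the certification of $n\notin A$.

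The main obstacle, as in all such results, is this base construction: producing one smooth, computable planar flow on the bounded disk $K$ that faithfully realizes the reduction $x_n\in W_s\iff n\in A$ while simultaneously keeping the equilibrium unique (a sink) and the field inward-pointing on $\partial K$. The tension is between the unboundedness of the encoded computation and the boundedness of $K$; I would resolve it by letting the encoding features accumulate on a subset of the separatrix, which is exactly where maintaining $C^{\infty}$-smoothness and uniform computability of $h$ demands the most care. I expect to lean on the planar constructions of \cite{GZ21}, and on the computability of trajectories from $f$ and $x$ used there, to certify the dynamical behaviour of each feature.

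Finally, for the semi-robust clause I would argue by a supported perturbation, in direct analogy with the observation that $f+\varepsilon$ has the same derivative as $f$ in the differentiation case. Given any $C$-neighborhood $U$ of $h$ in $\mathcal{V}(K)$, I would add to $h$ a small computable $C^{\infty}$ vector field $\eta$ supported in a thin inward-pointing collar of $\partial K$, disjoint from $\overline{W_s}$, from the separatrix, and from all encoding features, and with $\|\eta\|$ small enough that $\hat h:=h+\eta\in U$ and $\hat h$ still points inward on $\partial K$. Since $\eta$ alters the dynamics only in a region lying entirely in $\R^2\setminus W_s$ and through which no point of $W_s$ ever passes, the sink $\hat s=s$, the separatrix, and hence the basin are unchanged: $W_{\hat h}=W_s$ is non-computable and $\hat h$ is computable with a unique sink. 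As $U$ was arbitrary, this yields semi-robust non-computability; combined with Theorem~\ref{Th:ComputabilityBasins} and the density of $\mathcal{SS}_2$, it also exhibits the $\mathcal{SS}_2$-removable behaviour noted before the theorem.
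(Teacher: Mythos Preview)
Your base construction is the genuine gap. You correctly observe that a radial field $\dot r=g(r)$, $\dot\theta=1$ with an attracting outer limit cycle would make $W_s$ computable, since membership and non-membership in $W_s$ would both be semi-decidable from the dynamics and bisection on the radius would locate the separatrix. But your proposed fix---encoding a non-recursive r.e.\ set $A$ through features accumulating on the separatrix so that $x_n\in W_s\iff n\in A$---is neither carried out nor clearly feasible on a bounded planar disk with a \emph{unique} equilibrium; you yourself flag it as ``the main obstacle'' and defer to unspecified constructions from \cite{GZ21}. The paper bypasses this entirely with a much simpler device, taken from \cite{Zho09}: set $h_1=x_1f(x_1^2+x_2^2)-x_2$, $h_2=x_2f(x_1^2+x_2^2)+x_1$, where the scalar profile $f$ is \emph{non-positive} throughout, with $f<0$ on $[0,\alpha)$ and on $(2,\infty)$ but $f\equiv 0$ on the entire interval $[\alpha,2)$, for a left-computable non-computable real $\alpha>1/2$. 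There is no outer attractor and no sign change of $f$: points with $x_1^2+x_2^2\in[\alpha,2)$ sit on invariant circles forever, so ``not in $W_s$'' is not semi-decidable from the flow, and $W_s=\{x_1^2+x_2^2<\alpha\}$ is r.e.\ open but not co-r.e., hence non-computable. This is precisely the possibility you dismissed by tacitly assuming the separatrix must arise from a sign change of the radial profile.

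Your semi-robustness argument, adding a small computable $C^\infty$ bump $\eta$ supported in a collar of $\partial K$ disjoint from $\overline{W_s}$, is sound and would also work for the paper's $h$ (since $\overline{W_s}\subset\{x_1^2+x_2^2\le 2\}$ is computably separated from $\partial K$, and $|h|$ is bounded below on the collar so no new equilibria appear). It is, however, different from what the paper does: there one shifts the scalar profile, taking $g(w)=f(w-2^{-\theta(n)})$ with $\theta$ a modulus of uniform continuity of $f$, so that the resulting $\hat h$ satisfies $\|h-\hat h\|_K<\epsilon$, retains the origin as its unique sink, and has basin $\{x_1^2+x_2^2<\alpha+2^{-\theta(n)}\}$, again with non-computable radius. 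Both routes give semi-robust non-computability; the paper's keeps $\hat h$ in the same explicit radial family and actually moves the basin, while yours leaves $W_{\hat s}=W_s$ unchanged.
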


\begin{proof} We make use of an example constructed in \cite{Zho09}:
    there is a  $C^{\infty}$ function $f: \mathbb{R}\to \mathbb{R}$
    uniformly computable in $\mathbb{R}$ such that the system
    \[ dx/dt = h(x) \]
    has a unique equilibrium point at $s=(0, 0)$, where $x=(x_1, x_2)$,
    $h(x_1, x_2) = (h_1(x_1, x_2), h_2(x_1, x_2))$, $h_1(x_1, x_2) =
    x_{1}f(x_1^2 + x_2^2) - x_2$ and $h_2(x_1, x_2) = x_{2}f(x_1^2 +
    x_2^2) + x_1$. This unique equilibrium point is a sink, and the
    basin of attraction $W_s$ of $s$ is the open disk $D_{\alpha}=\{
    (x_1, x_2)\in \mathbb{R}^2: \, x_1^2+x_2^2 < \alpha\}$, where
    $\alpha > 1/2$ is a left-computable but non-computable real number.
    Since $\alpha$ is non-computable, the basin of attraction $W_s$ is a
    non-computable open subset of $\mathbb{R}^2$. It is clear that $h$
    is a computable $C^{\infty}$ function. \\

    The key features of $f$ can be summarized as follows:
    \begin{itemize}
        \item[(1)]
        $f(w)<0$ for all $w\in (-1/4, 1/2)$ (ensuring $(0, 0)$ being a
        sink);
        \item[(2)] $f(w)=0$ for all $\alpha \leq w < 2$ and $f(w)<0$ for all $w\in [0,
        \infty)\setminus [\alpha, 2]$ (ensuring $W_s=D_{\alpha}$);
    \end{itemize}
    In polar coordinates, the equation $dx/dt = h(x)$ can be rewritten
    as $dr/dt = 2rf(r)$, $d\theta / dt = 1$, where $r=x_1^2 + x_2^2$.
    Since $f(w)<0$ for all $w>2$, it follows that $dr/dt < 0$ along the
    boundary of $K$. Hence the trajectories of the given planar system
    are transversal to and move inwards along the boundary of $K$;
    subsequently, $h\in \mathcal{V}(K)$.

    We turn now to prove the semi-robustness of non-computability at $h$.
    First we show that a small enough horizontal shift of $f$ is ``$C$
    close to $f$." Indeed,  since $f$ is uniformly computable in
    $[-1,16]$, there exists a computable function $\theta:
    \mathbb{N}\to \mathbb{N}$  (assuming that $\theta(n)> n+2$ without
    loss of generality) such that $| f(x) - f(y) | < 2^{-(n+2)}$
    whenever $| x - y | < 2^{-\theta(n)}$ for all $n\in \mathbb{N}$ and
    $x, y\in [-1,16]$. Let $g$ be a $2^{-\theta(n)}$ horizontal shift
    of $f$: $g(x) = f(x - 2^{-\theta(n)})$.
    Then $| f(x) - g(x) | = | f(x) - f(x - 2^{-\theta(n)}) | <
    2^{-(n+2)}$.

    Now for any $0<\epsilon < 1/4$, let $U$ be the
    $\epsilon$-neighborhood of $h$ (in $C$-norm) in $\mathcal{V}(K)$. We
    wish to find a computable function $g$ such that the system
    \begin{equation} \label{h-hat}
    dx/dt = \hat{h}(x_1, x_2)
    \end{equation}
    has a unique sink at $(0, 0)$ and the basin of attraction is
    non-computable, where $\hat{h}(x_1, x_2) = (\hat{h}_1(x_1, x_2),
    \hat{h}_2(x_1, x_2))$, $\hat{h}_1(x_1, x_2)=x_{1}g(x_1^2+x_2^2) -
    x_2$,  and $\hat{h}_2(x_1, x_2)=x_{2}g(x_1^2+x_2^2) + x_1$. Clearly,
    if $g$ is computable, so is $\hat{h}$.
    Let $g$ be a horizontal shift of $f$ with shifting distance less
    than 1 yet to be determined. Since on $K$ we have
    \begin{eqnarray*}
        & & \| h - \hat{h}\|_{K} \\
        & = & \max_{x\in K} \{ \| h_1(x) - \hat{h}_{1}(x)\|, \| h_2(x) -
        \hat{h}_{2}(x)\| \} \\
        & \leq & \max_{\sqrt{x_1^2+x_2^2}\leq 4} \{ |x_1|\, |f(x_1^2 +
        x_2^2) - g(x_1^2 + x_2^2)|, |x_2|\,
        |f(x_1^2 + x_2^2) - g(x_1^2 + x_2^2)| \} \\
        & = & \max_{x_1^2+x_2^2\leq 16} \{ |x_1|\, |f(x_1^2 + x_2^2) -
        g(x_1^2 + x_2^2)|, |x_2|\, |f(x_1^2 + x_2^2) - g(x_1^2 + x_2^2)| \}
        \\
        & \leq & 4 |f(x_1^2 + x_2^2) - g(x_1^2 + x_2^2)|
    \end{eqnarray*}
    it follows that if $|f(x_1^2 + x_2^2) - g(x_1^2 + x_2^2)|\leq
    \epsilon/4$, then $\| h - \hat{h}\|_{K} < \epsilon$. We can now
    determine the shift distance: pick $n$ such that $2^{-n} \leq
    \epsilon/4$ and take $g$ to be the $2^{-\theta(n)}$ horizontal shift
    of $f$: $g(x)=f(x - 2^{-\theta(n)})$.  Then $g$ is a computable
    function and $|f(x_1^2 + x_2^2) - g(x_1^2 + x_2^2)| = |f(x_1^2 +
    x_2^2) - f(x_1^2 + x_2^2 - 2^{-\theta(n)})|\leq \epsilon/4$, which
    in turn implies that $\hat{h}\in U$.

    In polar coordinates the system $dx/dt = \hat{h}(x)$ can be written
    as $dr/dt=2rg(r)$, $d\theta/dt = 1$, where $r=x_1^2+x_2^2$. The
    polar form indicates that the origin $(0, 0)$ is the unique
    equilibrium point of the system $dx/dt = \hat{h}(x)$. Moreover,
    since $g(w)<0$ for all $w > 2+ 2^{-\theta(n)}$, it follows from the
    polar form that the trajectories of the system (\ref{h-hat}) are
    transversal to and move inwards along the boundary of $K$. Hence
    $\hat{h}\in \mathcal{V}(K)$. Now since
    $f(w)<0$ in the interval $(-1/4, 1/2)$ and the shift distance is
    $2^{-\theta(n)}$ satisfying $2^{-\theta(n)} < 2^{-(n+2)} <
    \epsilon/8 < 1/32$, it follows that
    $0\in (-\frac{1}{4}+2^{-\theta(n)}, \frac{1}{2}+2^{-\theta(n)})$ and
    $g(w)<0$ in this interval, which ensures that the origin $(0, 0)$
    remains a sink of the new system. Since $g(w)=0$ for all $\alpha +
    2^{-\theta(n)} \leq w < 2 + 2^{-\theta(n)}$ and $g(w) < 0$ for all
    $w\in [0, 3]\setminus [\alpha + 2^{-\theta(n)}, 2+ 2^{-\theta(n)}]$,
    the basin of attraction of $(0, 0)$ is the disk $D=\{ (x_1, x_2)\in
    \mathbb{R}^2: \, x_1^2+x_2^2 < \alpha + 2^{-\theta(n)}\}$. Since
    $\alpha$ is non-computable and $2^{-\theta(n)}$ is a rational
    number, $\alpha + 2^{-\theta(n)}\texttt{}$ is a non-computable
    number, which further implies that $\sqrt{\alpha + 2^{-\theta(n)}}$
    is a non-computable number. Hence, the basin of attraction is
    non-computable.
\end{proof}

\bibliographystyle{alpha}
\bibliography{ContComp}

\begin{thebibliography}{BHW08}

\bibitem[Abe71]{Abe71}
O.~Aberth.
\newblock The failure in computable analysis of a classical existence theorem
  for differential equations.
\newblock {\em Proceedings of the American Mathematical Society}, 30:151--156,
  1971.

\bibitem[BHW08]{BHW08}
V.~Brattka, P.~Hertling, and K.~Weihrauch.
\newblock A tutorial on computable analysis.
\newblock In S.~B. Cooper, , B.~L{\"{o}}we, and A.~Sorbi, editors, {\em New
  Computational Paradigms: Changing Conceptions of What is Computable}, pages
  425--491. Springer, 2008.

\bibitem[BR89]{BR89}
G.~Birkhoff and G.-C. Rota.
\newblock {\em Ordinary Differential Equations}.
\newblock John Wiley {\&} Sons, 4th edition, 1989.

\bibitem[Bra99]{Bra99}
V.~Brattka.
\newblock Computable invariance.
\newblock {\em Theor. Comput. Sci.}, 210(1):3--20, 1999.

\bibitem[GCB08]{GCB08}
D.~S. Gra{\c{c}}a, M.~L. Campagnolo, and J.~Buescu.
\newblock Computability with polynomial differential equations.
\newblock {\em Advances in Applied Mathematics}, 40(3):330--349, 2008.

\bibitem[GH83]{GH83}
J.~Guckenheimer and P.~Holmes.
\newblock {\em Nonlinear Oscillations, Dynamical Systems, and Bifurcation of
  Vector Fields}.
\newblock Springer, 1983.

\bibitem[GZ15]{GZ15}
D.~S. Gra{\c{c}}a and N.~Zhong.
\newblock An analytic system with a computable hyperbolic sink whose basin of
  attraction is non-computable.
\newblock {\em Theory of Computing Systems}, 57:478--520, 2015.

\bibitem[GZ21]{GZ21}
Daniel~S. Gra{\c{c}}a and Ning Zhong.
\newblock Computing the exact number of periodic orbits for planar flows, 2021.
\newblock Submitted for publication, available at
  http://arxiv.org/abs/2101.07701.

\bibitem[GZB09]{GZB07}
D.~S. Gra{\c{c}}a, N.~Zhong, and J.~Buescu.
\newblock Computability, noncomputability and undecidability of maximal
  intervals of {IVP}s.
\newblock {\em Transactions of the American Mathematical Society},
  361(6):2913--2927, 2009.

\bibitem[GZB12]{GZB12}
D.~S. Gra{\c{c}}a, N.~Zhong, and J.~Buescu.
\newblock Computability, noncomputability, and hyperbolic systems.
\newblock {\em Applied Mathematics and Computation}, 219(6):3039--3054, 2012.

\bibitem[HS74]{HS74}
M.~W. Hirsch and S.~Smale.
\newblock {\em Differential Equations, Dynamical Systems, and Linear Algebra}.
\newblock Academic Press, 1974.

\bibitem[PER79]{PR79}
M.~B. Pour-El and J.~I. Richards.
\newblock A computable ordinary differential equation which possesses no
  computable solution.
\newblock {\em Annals of Mathematical Logic}, 17:61--90, 1979.

\bibitem[PER81]{PR81}
M.~B. Pour-El and J.~I. Richards.
\newblock The wave equation with computable initial data such that its unique
  solution is not computable.
\newblock {\em Advances in Mathematics}, 39:215--239, 1981.

\bibitem[PER89]{PR89}
M.~B. Pour-El and J.~I. Richards.
\newblock {\em Computability in Analysis and Physics}.
\newblock Springer, 1989.

\bibitem[Per01]{Per01}
L.~Perko.
\newblock {\em Differential Equations and Dynamical Systems}.
\newblock Springer, 3rd edition, 2001.

\bibitem[PEZ97]{PZ97}
M.~B. Pour-El and N.~Zhong.
\newblock The wave equation with computable initial data whose unique solution
  is nowhere computable.
\newblock {\em Mathematical Logic Quarterly}, 43:499--509, 1997.

\bibitem[Wei00]{Wei00}
K.~Weihrauch.
\newblock {\em Computable Analysis: an Introduction}.
\newblock Springer, 2000.

\bibitem[WZ02]{WZ02}
K.~Weihrauch and N.~Zhong.
\newblock Is wave propagation computable or can wave computers beat the
  {T}uring machine?
\newblock {\em Proceedings of the London Mathematical Society}, 85(3):312--332,
  2002.

\bibitem[Zho09]{Zho09}
N.~Zhong.
\newblock Computational unsolvability of domain of attractions of nonlinear
  systems.
\newblock {\em Proceedings of the American Mathematical Society},
  137:2773--2783, 2009.

\end{thebibliography}

\end{document}